\newtheorem{theorem}{Theorem}[section]
\newtheorem{corollary}[theorem]{Corollary}
\newtheorem{proposition}[theorem]{Proposition}
\theoremstyle{definition}
\newtheorem{defn}[theorem]{Definition}
\theoremstyle{remark}
\newtheorem{remark}[theorem]{Remark}
\numberwithin{equation}{section}
\newcommand{\BibTeX}{B\kern-0.1emi\kern-0.017emb\kern-0.15em\TeX}
\newcommand{\XYpic}{$\mathrm{X\kern-0.3em\raisebox{-0.18em}{Y}}$-$\mathrm{pic}\,$}
\newcommand{\cl}{C \kern -0.1em \ell}  %%Clifford algebra
\newcommand{\by}{\mathbf{y}}
\newcommand{\Rm}{\mathbb{R}^m}
\newcommand{\R}{\mathbb{R}}
\newcommand{\Clm}{\mathcal{C}l_m}
\newcommand{\Bm}{\mathbb{B}^{m}}
\newcommand{\Sm}{\mathbb{S}^{m-1}}
\newcommand{\Hk}{\mathcal{H}_k}
\newcommand{\Mk}{\mathcal{M}_k}
\newcommand{\Mkk}{\mathcal{M}_{k-1}}
\newcommand{\be}{\boldsymbol{e}}
\newcommand{\boy}{\boldsymbol{y}}
\newcommand{\bo}{\boldsymbol}
\newcommand{\bx}{\boldsymbol{x}}
\newcommand{\bu}{\boldsymbol{u}}
\newcommand{\bov}{\boldsymbol{v}}
\newcommand{\bt}{\boldsymbol{t}}
\newcommand{\bzeta}{\boldsymbol{\zeta}}
\newcommand{\bbeta}{\boldsymbol{\eta}}
\newcommand{\balpha}{\boldsymbol{\alpha}}
\newcommand{\bbbeta}{\boldsymbol{\beta}}
\newcommand{\bv}{\bigg\vert}
\newcommand{\baa}{\begin{align*}}
\newcommand{\eaa}{\end{align*}}
\newcommand{\ed}{\end{document}}
\begin{document}

%-------------------------------------------------------------------------
% editorial commands: to be inserted by the editorial office
%
%\firstpage{1} \volume{228} \Copyrightyear{2004} \DOI{003-0001}
%
%
%\seriesextra{Just an add-on}
%\seriesextraline{This is the Concrete Title of this Book\br H.E. R and S.T.C. W, Eds.}
%
% for journals:
%
%\firstpage{1}
%\issuenumber{1}
%\Volumeandyear{1 (2004)}
%\Copyrightyear{2004}
%\DOI{003-xxxx-y}
%\Signet
%\commby{inhouse}
%\submitted{March 14, 2003}
%\received{March 16, 2000}
%\revised{June 1, 2000}
%\accepted{July 22, 2000}
%
%
%
%---------------------------------------------------------------------------
%Insert here the title, affiliations and abstract:
%

\title[Properties and integral transforms in higher spin Clifford analysis]
 {Some properties and integral transforms in higher spin Clifford analysis}
%----------Author 1
\author[Chao Ding]{Chao Ding}
\address{Building H, Room 310,\\
Jiulong Road 111, Qingyuan Campus, \\
Center for Pure Mathematics, \\School of Mathematical Sciences,\\ Anhui University, Hefei, P.R. China}
\email{cding@ahu.edu.cn}

%\author[Zhenghua Xu]{Zhenghua Xu}
%
%\address{%
%School of Mathematics,\\
%Hefei University of Technology, Hefei, P.R. China}
%\email{zhxu@hfut.edu.cn}

%%%%

%

%\thanks{This file has been typeset with the option \texttt{draft} to illustrate that feature and its purpose.}
%----------Author 2
%\author[R.~Ab\l amowicz]{Rafa\l \ Ab\l amowicz}
%\author[]{Rafa\l \ Ab\l amowicz}
%\address{%
%E-i-C of AACA\\
%Sarasota, FL 34238}
%\email{rablamowicz.aaca@birkhauser-science.com}
%----------classification, keywords, date
\subjclass{30G35, 32A30, 44A05}
\keywords{Rarita-Schwinger operator, Mean value propery, Teodorescu transform, Hodge decomposition, Generalized Bergman projection.}
\date{\today}
%----------additions
%\dedicatory{Last Revised:\\ \today}
%%% ----------------------------------------------------------------------
\begin{abstract}
Rarita-Schwinger equation plays an important role in theoretical physics. Bure\v s et al. generalized it to arbitrary spin $k/2$ in 2002 in the context of Clifford algebras. In this article, we introduce the mean value property, Cauchy's estimates, and Liouville's theorem for null solutions to Rarita-Schwinger operator in Euclidean spaces. Further, we investigate boundednesses to the Teodorescu transform and its derivatives. This gives rise to a Hodge decomposition of an $L^2$ spaces in terms of the kernel space of the Rarita-Schwinger operator and it also generalizes Bergman spaces in higher spin cases. 
\end{abstract}
\label{page:firstblob}
%%% ----------------------------------------------------------------------
\maketitle
%%% ----------------------------------------------------------------------
%\tableofcontents
\section{Introduction}~\par
In complex analysis, the Teodorescu transform plays an important role in the classical two dimensional Vekua theory. This transform is a two dimensional weak singular integral operator over a domain in the complex plane, which is the right inverse to the Cauchy-Riemann operator $\overline{\partial}$, and it is given by
\begin{align*}
T_{\Omega}f(z)=\int_{\Omega}\frac{f(\zeta)}{\zeta-z}d\zeta,
\end{align*}
where $z=u+iv\in\Omega$, and $\Omega$ is a domain in $\mathbb{C}$. In the classical book \cite{Vekua}, Vekua gave a comprehensive study on mapping properties and their applications in complex analysis. 
\par
In the last decades, higher dimensional generalizations and applications of the Teodorescu transform have been investigated by many researchers in the context of Clifford analysis. One of the first generalizations can be found in \cite{Sp1}. In \cite{Gur1,Gur2} G\"urlebeck and Spr\"o\ss ig developed an operator calculus in real Clifford algebras with special emphasis on Teodorescu transforms and their applications in solving certain boundary value problems. Spr\"o\ss ig \cite{Sp2} studied some elliptic boundary value problems with the Teodorescu transform. One of the important applications of the Teodorescu transform is to give the existence of solutions to the Beltrami equation, and there are many papers contributed to this topic. For instance, K\"ahler \cite{Kahler} generalized the Beltrami equation in cases of quaternions. G\"urlebeck and K\"ahler \cite{GK} investigated a hypercomplex generalization of the complex $\Pi$-operator and the solution of a hypercomplex Beltrami equation. More related work can be found, for instance, in \cite{BRAK,CK,GKS}.
\par
The higher spin theory in the context of Clifford analysis was firstly introduced by Bure\v s et al. \cite{Bures} in 2002. In \cite{Bures}, the authors introduced the generalized Rarita-Schwinger operator, which acts on functions taking values in $k$-homogeneous monogenic (null solutions to the Dirac operator) polynomials. It turns out that the Rarita-Schwinger operator is the natural generalization of the Dirac operator in the higher spin theory with many properties, such as Cauchy's Theorem, Cauchy integral formula, etc. are preserved. However, there are still some properties which have not been investigated in this context, for instance, mean value property, Cauchy's estimates, Liouville's theorem, etc. In this article, we will introduce the analogs of the mean value property, Cauchy's estimates and Liouville's theorem for the Rarita-Schwinger operator. We also study some properties of the Teodorescu transform. These results help us to obtain a Hodge decomposition related to the Rarita-Schwinger operator, which gives rise to generalized Bergman spaces in higher spin theory.
%Later, in 2016, Eelbode \cite{Eelbode} and De Bie \cite{DeBie} studied the generalizations of Laplacian in higher spin theory, which are named as the higher spin Laplace operators (also called bosonic Laplacians). Constructions, fundamental solutions to the higher spin Laplace operators and polynomial null solutions to the higher spin Laplace operators are studied there. 
\par
This article is organized as follows. Some definitions and notations of the Clifford algebras setting and Rarita-Schwinger operator are introduced in Section 2. Definitions of some function spaces in higher spin Clifford analysis are introduced in Section 3. Section 4 is devoted to properties for null solutions to the Rarita-Schwinger operator. Mapping properties for the Teodorescu transform in higher spin Clifford analysis are studied in Section 5. A Hodge decomposition for an $L^2$ in terms of the kernel space of the Rarita-Schwinger operator is given in Section 6. This gives rise to an analog of Bergman spaces in higher spin Clifford analysis ,which will be studied in an upcoming paper.
%%%%%%%%%%%%%%%%%%%%%%%%%%%%%
\section{Definitions and notations}
In this section, we review some definitions and preliminary results on Clifford analysis, for more details, we refer the readers to \cite{2,10,21}.
\par
Let $\R^m$ be the $m$-dimensional Euclidean space with a standard orthonormal basis $\{{\be}_1,\ldots,{\be}_m\}$. The real Clifford algebra $\Clm$ is generated by $\Rm$ with the following relationship
$${\be}_i{\be}_j+{\be}_j{\be}_i=-2\delta_{ij},$$
where $\delta_{ij}$ is the Kronecker delta function. Hence a Clifford number $x\in \Clm$ can be written as $x=\sum_{A}x_Ae_A$ with real coefficients and $A\subset\{1,\ldots,m\}$. This suggests that one can consider $\Clm$ as a vector space with dimension $2^m$. Therefore, a reasonable norm for a Clifford number $x=\sum_{A}x_Ae_A$ should be $|x|=(\sum_{A}x_A^2)^{\frac{1}{2}}$. If we denote $\Clm^{k}=\{x\in \Clm:\ x=\sum_{|A|=k}x_Ae_A\}$, where $|A|$ stands for the cardinality of the set $A$, then one can see that $\Clm=\displaystyle\bigoplus_{k=0}^m \Clm^k$. In particular, the $m$-dimensional Euclidean space $\R^{m}$ can be identified with $ \Clm^1$ as following
\begin{align*}
\R^{m}&\longrightarrow  \Clm^1,\\
(x_1,\ldots,x_m)&\longmapsto \bx=:x_1e_1+\cdots+x_me_m.
\end{align*}
The Dirac operator in $\mathbb{R}^m$ is defined to be $$D_{\bx}:=\sum_{i=1}^{m}e_i\partial_{x_i},$$ 
where $\partial_{x_i}$ stands for the partial derivative with respect to $x_i$. It is easy to  verify that $D_{\bx}^2=-\Delta_{\bx}$, where $\Delta_x$ is the Laplacian in $\mathbb{R}^m$. Therefore, we usually say Clifford analysis is a refinement of harmonic analysis.
\begin{defn}
 A $\Clm$-valued function $f(x)$ defined on a domain $U$ in $\Rm$ is \emph{left monogenic} if $D_{\bx}f(\bx)=0$. 
 \end{defn}
 Since Clifford multiplication is not commutative in general, there is a similar definition for right monogenic functions. Sometimes, we will consider the Dirac operator $D_{\bu}$ in a vector $\bu$ rather than $\bx$. In this article, we use monogenic to represent left monogenic unless specified.
\par
Let $\mathcal{M}_k$ denote the space of $\mathcal{C}l_m$-valued monogenic polynomials homogeneous of degree $k$. Note that if $h_k\in\Hk$, the space of $\mathcal{C}l_m$-valued harmonic polynomials homogeneous of degree $k$, then it is easy to see that $D_{\bu}h_k\in\mathcal{M}_{k-1}$, but $D_uup_{k-1}(\bu)=(-m-2k+2)p_{k-1}(\bu)$, so we have
$$\mathcal{H}_k=\mathcal{M}_k\oplus \bu\mathcal{M}_{k-1},\ h_k=p_k+\bu p_{k-1}.$$
This is a \emph{Fischer decomposition} of $\Hk$ \cite{DJR}. In particular, we denote the two projection maps by 
\begin{align*}
&P_k: \mathcal{H}_k\longrightarrow \mathcal{M}_k,\\
&Q_k: \mathcal{H}_k\longrightarrow \bu\mathcal{M}_{k-1}.
\end{align*}
Suppose $\Omega$ is a domain in $\mathbb{R}^m$, we consider a differentiable function $f: \Omega\times \mathbb{R}^m\longrightarrow \mathcal{C}l_m$
such that, for each $\bx\in \Omega$, $f(\bx,\bu)$ is a left monogenic polynomial homogeneous of degree $k$ in $\bu$. Then, the first order conformally invariant differential operator in higher spin theory, named as the Rarita-Schwinger operator \cite{Bures,DJR}, is defined by 
\begin{eqnarray*}
R_kf(\bx,\bu):=P_kD_{\bx}f(\bx,\bu)=\bigg(1+\frac{\bu D_{\bu}}{m+2k-2}\bigg)D_{\bx}f(\bx,\bu).
\end{eqnarray*}
%If a function $f\in C^1(\Omega\times\Bm,\Mk)$ satisfies $R_kf=0$, we call it a \emph{fermionic function}. 
The identity of $\text{End}(\Mk)$ can be represented by the reproducing kernel $Z_k(u,v)$ for the inner spherical monogenics of degree $k$. This so called zonal spherical monogenic satisfies
\begin{eqnarray*}
f(\bu)=\int\displaylimits_{\Sm}{Z_k(\bu,\bov)}f(\bov)dS(\bov),\ \text{for\ all}\ f\in\Mk.
\end{eqnarray*}
Then, the fundamental solution for $R_k$ is given by
\begin{eqnarray}\label{fund}
E_{k}(\bx,\bu,\bov)=\frac{1}{c_{m,k}}\frac{\bx}{|\bx|^m}Z_k\bigg(\frac{\bx\bu\bx}{|\bx|^2},\bov\bigg),
\end{eqnarray}
where $c_{m,k}=\displaystyle\frac{(m-2)\omega_{m-1}}{m+2k-2}$ and $\omega_{m-1}$ is the area of $(m-1)$-dimensional unit sphere.
\par
\begin{defn}
A function $f\in C^1(\Omega\times\Bm,\Mk)$ is called \emph{fermionic}, if it satisfies $R_kf(\bx,\bu)=0$ for all $(\bx,\bu)\in\Omega\times\Bm$.
\end{defn}
The space of fermionic functions can be considered as an analog of the space of holomorphic functions in higher spin Clifford analysis. It has been shown that fermionic functions have properties similar to those of holomorphic functions, such as Cauchy's theorem, Cauchy integral formula, etc., see \cite{Bures,DJR}. In this article, we will introduce more properties from complex analysis preserved by fermionic functions in Section 4. 
%%%%%%%%%%%%%%%%%%%%%%%%%
\section{Function spaces in higher spin Clifford analysis}
In this section, we introduce generalizations of some classical function spaces in higher spin theory. Let $\Omega\subset\R^m$ be a domain, the norm of the space $L^p(\Omega\times \Bm,\Clm)$ is given by
\begin{align*}
	||f||_{L^p}:=\bigg(\int_{\Omega}\int_{\Bm}|f(\bx,\bu)|^pdS(\bu)d\bx\bigg)^{\frac{1}{p}}.
\end{align*}
We introduce the Sobolev space $W^{s,t}_p(\Omega\times \Bm,\Clm)$ as the subset of functions $f$ in $L^p(\Omega\times \Bm,\Clm)$ such that $f$ and its derivatives up to order-$(s,t)$ have a finite $L^p$ norm. Let $\bo{\alpha}=(\alpha_1,\ldots,\alpha_m),\bo{\beta}=(\beta_1,\ldots,\beta_m)$ be multi-indices, $|\bo{\alpha}|=\sum_{j=1}^m\alpha_j,\ |\bo{\beta}|=\sum_{j=1}^m\beta_j$, and $\partial_{\bx}^{\bo{\alpha}}:=(\partial^{\alpha_1}_{x_1},\ldots,\partial_{x_m}^{\alpha_m})$, $\partial_{\bu}^{\bo{\beta}}:=(\partial^{\beta_1}_{u_1},\ldots,\partial_{u_m}^{\beta_m})$. The norm for $W^{s,t}_p(\Omega\times \Bm,\Clm)$ with $p>1$ is given by
\begin{align*}
||f||_{W^{s,t}_p(\Omega\times \Bm,\Clm)}:=\bigg[\sum_{|\bo{\alpha}|=0}^s\sum_{|\bo{\beta}|=0}^t||\partial_{\bx}^{\bo{\alpha}}\partial_{\bu}^{\bo{\beta}}f||^p_{L^p(\Omega\times\Bm,\Clm)}\bigg]^{\frac{1}{p}}.
\end{align*}
Now, we say that a function $f(\bx,\bu)\in L^p(\Omega\times \Bm,\Mk)$ if for each fixed $\bx\in\Omega$, $f(\bx,\bu)\in\Mk$ with respect to $\bu$ and the $L^p$ norm of $f$ is finite. 
In later sections, for Sobolev spaces $W^{s,t}_p(\Omega\times \Bm,V)$ with $V=\Mk\ \text{or}\ \Hk$, since it is a space of $k$-homogeneous polynomials in the variable $\bu$, we can omit the regularity with respect to $\bu$ to $W^{s}_p(\Omega\times \Bm,V)$ instead.
\par
 Now, we claim that
\begin{theorem}
	$L^p(\Omega\times \Bm,\Mk)$ is a closed subspace of $L^p(\Omega\times \Bm,\Clm)$.
	\end{theorem}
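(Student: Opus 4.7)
The plan is to exploit the finite-dimensionality of $\Mk$, which is the structural feature making the inclusion automatically closed. Since $\Mk$ consists of monogenic polynomials homogeneous of a fixed degree $k$ in $m$ variables with values in $\Clm$, it is a finite-dimensional vector subspace of the polynomials, and hence a closed subspace of $L^p(\Bm,\Clm)$ in whatever norm is induced by the definition of $\|\cdot\|_{L^p}$ on fibers. Consequently, the real content is transferring this fiberwise closedness to the joint space via a Fubini identification.

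Concretely, I would identify $L^p(\Omega\times\Bm,\Clm)$ with the Bochner-type space $L^p(\Omega,L^p(\Bm,\Clm))$, which is valid by Fubini. Then take any sequence $\{f_n\}\subset L^p(\Omega\times\Bm,\Mk)$ with $f_n\to f$ in $L^p(\Omega\times\Bm,\Clm)$. From
\begin{align*}
\int_{\Omega}\|f_n(\bx,\cdot)-f(\bx,\cdot)\|_{L^p(\Bm,\Clm)}^p\,d\bx=\|f_n-f\|_{L^p(\Omega\times\Bm,\Clm)}^p\longrightarrow 0,
\end{align*}
a standard subsequence extraction (still denoted $\{f_n\}$) yields $f_n(\bx,\cdot)\to f(\bx,\cdot)$ in $L^p(\Bm,\Clm)$ for almost every $\bx\in\Omega$. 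For each such $\bx$, $f_n(\bx,\cdot)\in\Mk$, and the closedness of $\Mk$ in $L^p(\Bm,\Clm)$ forces the limit $f(\bx,\cdot)$ to lie in $\Mk$. Hence $f(\bx,\cdot)\in\Mk$ for a.e.\ $\bx$, so $f\in L^p(\Omega\times\Bm,\Mk)$, giving closedness.

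There is essentially no obstacle; the argument is structural and hinges only on finite-dimensionality plus Fubini. The only minor technicality is the passage to an almost-everywhere convergent subsequence, which is entirely routine. An alternative, slightly more concrete presentation, would be to write the projection $P_k$ from the Fischer decomposition as an operator $L^p(\Omega\times\Bm,\Clm)\to L^p(\Omega\times\Bm,\Mk)$ via the reproducing kernel $Z_k(\bu,\bov)$ and observe that $L^p(\Omega\times\Bm,\Mk)$ is the fixed-point set of the bounded idempotent $P_k$; however, verifying boundedness of $P_k$ is heavier machinery than the direct finite-dimensional argument sketched above, so I would prefer the latter.
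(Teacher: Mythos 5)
Your proof is correct and rests on the same structural fact as the paper's, namely the finite-dimensionality of $\Mk$. The paper argues via pointwise a.e.\ convergence and an expansion in an orthonormal basis $\{\varphi_j\}$ of $\Mk$, while you argue via a.e.\ slice-wise $L^p(\Bm)$-convergence and closedness of the finite-dimensional subspace $\Mk$; these are minor variants of the same argument, and if anything yours is slightly more careful in explicitly extracting an a.e.-convergent subsequence, a step the paper glosses over when it passes from $L^p$-convergence to a.e.\ convergence without mentioning the subsequence.
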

\begin{proof}
	Let $\{f_n\}_{n=1}^{\infty}$ be a Cauchy sequence in $L^p(\Omega\times \Bm,\Mk)$, which converges to $f\in L^p(\Omega\times \Bm,\Clm)$. This means that 
	\begin{align*}
		\lim_{n\rightarrow\infty}\int_{\Omega}\int_{\Sm}|f_n(\bx,\bu)-f(\bx,\bu)|^pdS(\bu)d\bx=0,
	\end{align*}
	and this gives rise to that for almost every $(\bx,\bu)\in\Omega\times \Bm$, we have $$\lim_{n\rightarrow\infty}f_n(\bx,\bu)=f(\bx,\bu).$$
	Further, since for each fixed $\bx\in\Omega$, $f_n(\bx,\bu)\in \Mk$ and $\Mk$ is a finite dimensional vector space, let $\{\varphi_j(\bu)\}_{j=1}^s$ be an orthonormal basis of $\Mk$. Then, for each fixed $\bx\in\Omega$, we can rewrite 
	\begin{align*}
	f_n(\bx,\bu)=\sum_{j=1}^s\varphi_{j}(\bu)a_{n,j}(\bx),
	\end{align*}
	where $a_{n,j}(\bx)$ are all Clifford-valued numbers depending on $\bx$. Apparently, the convergence of $\{f_n(\bx,\bu)\}_{n=1}^{\infty}$ is equivalent to the convergence of the sequence $\{a_{n,j}(\bx)\}_{n=1}^{\infty}$ for all $j=1,2,\cdots,s$, we assume that $\lim_{n\rightarrow\infty}a_{n,j}(\bx)=a_j(\bx)$ for $j=1,2,\cdots,s$. Therefore, we must have 
		\begin{align*}
	f(\bx,\bu)=\sum_{j=1}^s\varphi_{j}(\bu)a_{j}(\bx),
	\end{align*}
	which implies that $f\in \Mk$, i.e., $f\in L^p(\Omega\times \Bm,\Mk)$.
\end{proof}
\begin{remark}
One might notice that the integral with respect to $\bu$ in the $L^p$ norm used above is over $\Sm$ instead of $\Bm$. This is because they are equivalent since the functions considered are $k$-homogeneous with respect to $u$.
\end{remark}
%%%%%%%%%%%%%%%%%%%%%%%%%%%%%%%%%%
\section{Properties for fermionic functions}
In this section, we investigate mean value property, Cauchy's estimates and Liouville's theorem for fermionic functions. This once again shows that fermio-nic functions behave similarly to monogenic functions.
\par
Firstly, we introduce a mean value property for fermionic functions as follows.
\begin{theorem}[Mean Value Property]\label{MVP1}~\\
Let $f\in C^1(\Omega\times\Bm,\Mk)\cap C(\overline{\Omega}\times\overline{\Bm},\Mk)$ and $R_kf=0$, then for all $(\boy,\bu)\in\Omega\times\Bm$, we have
\begin{align*}
f(\boy,\bu)=&\frac{1}{\omega_{m-1}c_k}\int_{|\bt|=1}P_{k,\bu}f(\boy+r\bt,\bt\bu\tilde{\bt})d\bt\\
=&\frac{1}{c_kV(B(\boy,r))}\int_{B(\boy,r)}P_{k,\bu}f\bigg(\bx,\frac{(\bx-\boy)\bu\widetilde{(\bx-\boy)}}{|\bx-\boy|^2}\bigg)d\bx.
\end{align*}
\end{theorem}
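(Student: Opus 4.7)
The plan is as follows. The first equality will follow from the Cauchy integral formula for the Rarita-Schwinger operator (established in \cite{Bures,DJR}) applied to $f$ over $\partial B(\boy,r)$, together with the explicit form of the fundamental solution $E_k$ in (\ref{fund}) and the reproducing property of the zonal spherical monogenic $Z_k$. The second equality will be obtained from the first by a polar-coordinate integration in the radial variable.

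More concretely, I would choose $r>0$ small enough that $\overline{B(\boy,r)}\subset\Omega$ and apply the Cauchy integral representation schematically of the form
$$f(\boy,\bu)=\int_{\partial B(\boy,r)}\int_{\Sm}E_k(\bx-\boy,\bu,\bov)\,n(\bx)\,f(\bx,\bov)\,dS(\bov)\,dS(\bx).$$
Parametrising the sphere by $\bx=\boy+r\bt$ with $\bt\in\Sm$, the outward unit normal is $n(\bx)=\bt$ and $dS(\bx)=r^{m-1}d\bt$. Substituting into (\ref{fund}) and using $|r\bt|=r$ gives
$$E_k(r\bt,\bu,\bov)\,n(\bx)\,r^{m-1}=\frac{1}{c_{m,k}}\,\bt\,Z_k(\bt\bu\tilde{\bt},\bov)\,\bt,$$
and the two $\bt$-factors combine via $\bt^2=-1$ into a scalar. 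The $\bov$-integration against $f(\bx,\bov)$ is then absorbed by the reproducing property $\int_{\Sm}Z_k(\bw,\bov)g(\bov)\,dS(\bov)=g(\bw)$ for $g\in\Mk$, substituting $\bw=\bt\bu\tilde{\bt}$ into $f(\bx,\cdot)$. Since $\bu\mapsto\bt\bu\tilde{\bt}$ does not in general preserve $\Mk$ as a polynomial in $\bu$, the projector $P_{k,\bu}$ appears naturally on the outside, and after collecting all constants into a single factor $\frac{1}{\omega_{m-1}c_k}$ the first identity emerges.

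For the volume version, the key observation is that the spherical formula holds for every $0<\rho\le r$, not only at the chosen radius. Multiplying that identity by $\rho^{m-1}$ and integrating over $\rho\in(0,r)$ reassembles the surface integrals into an integral over $B(\boy,r)$ via polar coordinates $\bx=\boy+\rho\bt$, using crucially that $\frac{(\bx-\boy)\bu\widetilde{(\bx-\boy)}}{|\bx-\boy|^2}=\bt\bu\tilde{\bt}$ is independent of $\rho$. The constant works out because $\int_0^r\rho^{m-1}d\rho=r^m/m$ matches $V(B(\boy,r))=\omega_{m-1}r^m/m$.

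The main obstacle I anticipate is the appearance of the projector $P_{k,\bu}$: one must explain carefully why, after the reproducing kernel substitutes $\bov=\bt\bu\tilde{\bt}$, the resulting Clifford-polynomial in $\bu$ is precisely the $P_{k,\bu}$-projection of the naive evaluation, rather than that evaluation itself. Tracking the signs produced by $\bt^2=-1$ together with the normalization of $Z_k$ in order to recover the stated constant $\frac{1}{\omega_{m-1}c_k}$ constitutes the most delicate bookkeeping; once this is settled, the conversion from spheres to balls is a routine polar-coordinate computation.
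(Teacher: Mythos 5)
Your overall plan — Cauchy integral formula on $\partial B(\boy,r)$, parametrize $\bx=\boy+r\bt$, apply the reproducing property of $Z_k$, then integrate in the radial variable — is exactly the skeleton of the paper's proof, and your polar-coordinate passage from the spherical to the volume mean is correct and matches the paper. However, there is a genuine gap in the middle step, and it is precisely the one you flagged as "the main obstacle."

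Your claim that ``the two $\bt$-factors combine via $\bt^2=-1$ into a scalar'' is false: in the product $\bt\,Z_k(\bt\bu\tilde{\bt},\bov)\,\bt$ the two unit vectors sit on opposite sides of the Clifford-valued kernel $Z_k$, which does not commute with $\bt$, so they cannot be collapsed to $-1$. If you could collapse them, you would indeed end up with the naive evaluation $f(\bx,\bt\bu\tilde{\bt})$ and no visible reason for the projector $P_{k,\bu}$ to appear, which is the difficulty you correctly anticipated. The paper's resolution is to keep the sandwich structure intact. The inner $\bt$ multiplies $f(\boy+r\bt,\bov)$, and $\bt f\in\Hk$ is harmonic but \emph{not} monogenic in $\bov$; so the reproducing kernel $Z_k(\bt\bu\tilde{\bt},\cdot)$ applied to $\bt f$ does not simply evaluate it, but instead extracts its monogenic component, producing $\big(P_{k,\bov}\bt f(\boy+r\bt,\bov)\big)\big|_{\bov=\bt\bu\tilde{\bt}}$. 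The outer $\tilde{\bt}$ is then absorbed by the conjugation identity $\tilde{\bt}\,P_{k,\bov}\,\bt=P_{k,\bu}$ for $\bov=\bt\bu\tilde{\bt}$, which is exactly what turns the expression into $P_{k,\bu}f(\boy+r\bt,\bt\bu\tilde{\bt})$. This projector identity is the key lemma your argument is missing; without it you cannot account for the $P_{k,\bu}$ in the statement, and the incorrect $\bt^2=-1$ simplification would hide the mechanism that produces it.
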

\begin{proof}
Recall that for $f\in\ker R_k$, the Cauchy integral formula tells us that
\begin{align*}
f(\boy,\bu)=&\int_{\partial B(\boy,r)}\int_{\Sm}E_k(\bx-\boy,\bu,\bov)n(\bx)f(\bx,\bov)dS(\bov)d\sigma(\bx)\\
=&\int_{|\bt|=1}\int_{\Sm}\frac{1}{\omega_{m-1}c_k}Z_k(\bu,\tilde{\bt}\bov\bt)\frac{\bt}{r^{m-1}}\bt f(\boy+r\bt,\bov)dS(\bov)r^{m-1}d\bt\\
=& \frac{1}{\omega_{m-1}c_k}\int_{|\bt|=1}\int_{\Sm}\tilde{\bt}Z_k(\bt\bu\tilde{\bt},\bov)\big(P_{k,\bov}\bt f(\boy+r\bt,\bov)\big)dS(\bov)d\bt\\
=&\frac{1}{\omega_{m-1}c_k}\int_{|\bt|=1}\tilde{\bt}\big(P_{k,\bov}\bt f(\boy+r\bt,\bov)\big)\bv_{\bov=\bt\bu\tilde{\bt}}d\bt\\
=&\frac{1}{\omega_{m-1}c_k}\int_{|\bt|=1}P_{k,\bu} f(\boy+r\bt,\bt\bu\tilde{\bt})d\bt,
\end{align*}
where the last equation comes from the fact that $\tilde{\bt}P_{k,\bov}\bt=P_{k,\bu}$, where $\bov=\bt\bu\tilde{\bt}$. Now, we calculate the integral over $B(\boy,r)$ as follows.
\begin{align*}
&\frac{1}{c_kV(B(\boy,r))}\int_{B(\boy,r)}P_{k,\bu}f\bigg(\bx,\frac{(\bx-\boy)\bu\widetilde{(\bx-\boy)}}{|\bx-\boy|^2}\bigg)d\bx\\
=&\frac{1}{c_kV(B(\boy,r))}\int_0^r\int_{|\bt|=1}P_{k,\bu}f(\boy+\epsilon\bt,\bt\bu\tilde{\bt})\epsilon^{m-1}dS(\bt)d\epsilon\\
=&\frac{1}{c_kV(B(\boy,r))}\int_0^r\epsilon^{m-1}\omega_{m -1}c_kf(\boy,\bu)d\epsilon\\
=&\frac{\omega_{m-1}r^m}{mV(B(\boy,r))}f(\boy,\bu)=f(\boy,\bu),
\end{align*}
where the second equation above relies on the mean value property over $|\bt|=1$, and this completes the proof.
\end{proof}
\begin{remark}
The reason that we name the theorem above as a mean value property is the following: in \cite{DNR1}, we introduced a mean value property for bosonic Laplacians $\mathcal{D}_k: C^{2}(\Omega\times\Bm,\Hk)\longrightarrow C^{2}(\Omega\times\Bm,\Hk)$ as 
\begin{align}\label{MVP}
f(\bx,\bov)=\int_{\Sm}f(\bx+r\bzeta,\bzeta\bov\bzeta)dS(\bzeta),
\end{align}
for all functions $f\in C^{2}(\Omega\times\Bm,\Hk)$ satisfying $\mathcal{D}_kf=0$. In \cite[Proposition 1]{DR}, a decomposition of $\mathcal{D}_k$ in terms of the Rarita-Schwinger type operators was given. More importantly, for functions $f\in C^2(\Omega\times\Bm,\Mk)$, the decomposition gives us that $\mathcal{D}_k=A_kR_k$, where $A_k$ is a first order differential operator of $\bx$ given in \cite[Proposition 2]{DR}. This implies that for a function $f\in C^2(\Omega\times\Bm,\Mk)$ satisfying $R_kf=0$, we immediately have $\mathcal{D}_kf=0$. Therefore, with \eqref{MVP} and $P_{k,\bov}f=f$ for  $f\in C^2(\Omega\times\Bm,\Mk)$, we have
\begin{align*}
f(\bx,\bov)=&P_kf(\bx,\bov)=P_{k,\bov}\int_{\Sm}f(\bx+r\bzeta,\bzeta\bov\bzeta)dS(\bzeta)\\
=&\int_{\Sm}P_{k,\bov}f(\bx+r\bzeta,\bzeta\bov\bzeta)dS(\bzeta),
\end{align*}
which gives the mean value property in Theorem \ref{MVP1}.
\end{remark}
Let $\balpha=(\alpha_1,\cdots,\alpha_m)$ and $\bbbeta=(\beta_1,\cdots,\beta_m)$ be multi-indices, where $\alpha_j,\beta_j, j=1,\cdots,m$ are all non-negative integers. It is easy to see that for each $\bx\in\partial\Omega,\bov\in\Sm$, the fundamental solution $E(\bx-\boy,\bu,\bov)$ given in \eqref{fund} is infinitely differentiable in $\Omega\times\Bm$ with respect to $\boy,\bu$.Therefore, $D_{\boy}^{\balpha}D_{\bu}^{\bbbeta}E(\bx-\boy,\bu,\bov)$ are integrable over $\Omega\times\Bm$.
\par
Recall that for $f\in C^1(\Omega\times\Bm,\Mk)\cap C(\overline{\Omega}\times\overline{\Bm},\Mk)$ and $R_kf=0$, the Cauchy integral formula \cite[Theorem 8]{DJR} gives us that
\begin{align*}
D_{\boy}^{\balpha}D_{\bu}^{\bbbeta}f(\boy,\bu)=&\int_{\partial \Omega}\int_{\Sm}\big(D_{\boy}^{\balpha}D_{\bu}^{\bbbeta}E_k(\bx-\boy,\bu,\bov)\big)n(\bx)f(\bx,\bov)dS(\bov)d\sigma(\bx),
\end{align*}
for any $(\boy,\bu)\in\Omega\times\Bm$. Differentiating under the integral sign, one can see that $f(\boy,\bu)$ is infinitely differentiable with respect to $(\boy,\bu)\in\Omega\times\Bm$. Now, we can easily derive an analog of Cauchy's estimates as follows.
\begin{theorem}[Cauchy's estimates with $L^{\infty}$ norm]\label{CE}
Let $\balpha,\bbbeta$ be multi-indices. Assume $f\in C^1(\Omega\times\Bm,\Mk)$ and $R_kf=0$. Further, let $r_1,r_2>0$ be sufficiently small such that $\overline{B(\boy,r_1)}\subset\Omega$ and $\overline{B(\bu,r_2)}\subset\Sm$. Then, there exists a constant $c_{\balpha,\bbbeta,m,k}$ such that 
\begin{align*}
|D_{\boy}^{\balpha}D_{\bu}^{\bbbeta}f(\boy,\bu)|\leq\frac{c_{\balpha,\bbbeta,m,k}||f||_{L^{\infty}(B(\boy,r_1)\times B(\bu,r_2),\Mk))}}{r_1^{|\balpha|}r_2^{|\bbbeta|}}.
\end{align*}
\end{theorem}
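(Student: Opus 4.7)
The plan is to combine the Cauchy integral representation displayed immediately before the theorem with a classical Cauchy estimate for polynomials in the spin variable. First, I would apply the integral formula on the ball $B(\boy,r_1)$ (valid since $\overline{B(\boy,r_1)}\subset\Omega$) and differentiate under the integral only in $\boy$, leaving the first spin slot as a free parameter $\bu'\in\Bm$:
\begin{align*}
D_{\boy}^{\balpha}f(\boy,\bu')=\int_{\partial B(\boy,r_1)}\int_{\Sm}\big(D_{\boy}^{\balpha}E_k(\bx-\boy,\bu',\bov)\big)n(\bx)f(\bx,\bov)dS(\bov)d\sigma(\bx).
\end{align*}
Using the explicit form \eqref{fund}, on $|\bx-\boy|=r_1$ the outer factor $(\bx-\boy)/|\bx-\boy|^m$ together with its $\boy$-derivatives of order $|\balpha|$ is of size $r_1^{1-m-|\balpha|}$, while the zonal monogenic $Z_k$ (whose first argument $(\bx-\boy)\bu'(\bx-\boy)/|\bx-\boy|^2$ is a magnitude-preserving Clifford conjugate of $\bu'$) stays uniformly bounded together with all its $\boy$-derivatives for $\bu'\in\Bm,\bov\in\Sm$. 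A Leibniz bookkeeping then yields $|D_{\boy}^{\balpha}E_k|\leq C r_1^{1-m-|\balpha|}$ on the sphere, and multiplying by the surface measures gives
\begin{align*}
|D_{\boy}^{\balpha}f(\boy,\bu')|\leq\frac{C_{m,k,\balpha}}{r_1^{|\balpha|}}\sup_{\bx\in\partial B(\boy,r_1),\,\bov\in\Sm}|f(\bx,\bov)|,
\end{align*}
uniformly in $\bu'\in\Bm$.

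Next, I would insert the $r_2^{-|\bbbeta|}$ factor by exploiting that $\bu'\mapsto D_{\boy}^{\balpha}f(\boy,\bu')$ is a $\Clm$-valued polynomial of degree $k$ in $\bu'$, since $f(\boy,\cdot)\in\Mk$ and $\boy$-differentiation preserves the polynomial structure in the spin slot. The classical Cauchy estimate for polynomials (or equivalently, the Cauchy integral formula for monogenic polynomials applied on the ball $B(\bu,r_2)\subset\Bm$) then gives
\begin{align*}
|D_{\bu}^{\bbbeta}D_{\boy}^{\balpha}f(\boy,\bu)|\leq\frac{C_{k,m,|\bbbeta|}}{r_2^{|\bbbeta|}}\sup_{\bu'\in B(\bu,r_2)}|D_{\boy}^{\balpha}f(\boy,\bu')|.
\end{align*}
Combining the last two displays produces the required $r_1^{-|\balpha|}r_2^{-|\bbbeta|}$ scaling.

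The hard part will be the final bookkeeping step of replacing the supremum over $\partial B(\boy,r_1)\times\Sm$ naturally produced by the Cauchy representation with the supremum over $B(\boy,r_1)\times B(\bu,r_2)$ appearing in the $L^{\infty}$ norm in the statement. The $\bx$-factor is immediate from $\partial B(\boy,r_1)\subset\overline{B(\boy,r_1)}$, but the passage from $\sup_{\Sm}$ to $\sup_{B(\bu,r_2)}$ in the spin variable is more delicate: it rests on the fact that both define norms on the finite-dimensional space $\Mk$, hence are equivalent, with the equivalence constant absorbed into $c_{\balpha,\bbbeta,m,k}$.
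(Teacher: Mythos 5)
Your route is genuinely different from the paper's. The paper first normalizes to $\Omega=\Bm$, bounds the \emph{doubly} differentiated kernel $D_{\boy}^{\balpha}D_{\bu}^{\bbbeta}E_k$ on $\Sm\times\Sm$ by a constant $c_{\balpha,\bbbeta,m,k}$, and then produces the $r_1^{-|\balpha|}r_2^{-|\bbbeta|}$ factor by applying this normalized bound to the translated-and-rescaled function $f(\boy+r_1\bzeta,\bu+r_2\bbeta)$ viewed as a function of $(\bzeta,\bbeta)\in\Bm\times\Bm$; both derivatives come at once out of the Cauchy representation, and the powers of $r_1,r_2$ arise from the chain rule. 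Your scheme instead extracts only the $\boy$-derivative from the Cauchy kernel and then handles the $\bu$-derivative by a Markov--Bernstein polynomial estimate on $B(\bu,r_2)$. These are genuinely different decompositions of the estimate, and either could in principle be carried through.

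However, the step you flag as ``the hard part'' is a real gap, not mere bookkeeping. Combining your two displays yields
\begin{align*}
|D_{\bu}^{\bbbeta}D_{\boy}^{\balpha}f(\boy,\bu)|\leq\frac{C}{r_1^{|\balpha|}r_2^{|\bbbeta|}}\sup_{\bx\in\partial B(\boy,r_1),\ \bov\in\Sm}|f(\bx,\bov)|,
\end{align*}
and you then wish to replace $\sup_{\Sm}$ by $\sup_{B(\bu,r_2)}$ on the strength of equivalence of norms on the finite-dimensional space $\Mk$. But the equivalence constant
\begin{align*}
\sup_{0\neq p\in\Mk}\frac{\sup_{\bov\in\Sm}|p(\bov)|}{\sup_{\bov\in B(\bu,r_2)}|p(\bov)|}
\end{align*}
depends on $\bu$ and $r_2$ and cannot be absorbed into $c_{\balpha,\bbbeta,m,k}$: it blows up as $r_2\to 0$. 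Indeed, for any $p\in\Mk$ with $p(\bu)=0$ but $p\not\equiv 0$, the denominator tends to $0$ while the numerator stays positive; quantitatively, the standard Remez/doubling estimate for degree-$k$ polynomials gives growth of order $r_2^{-k}$. Absorbing that conversion therefore costs an extra $r_2^{-k}$ which the stated theorem does not allow. In short, the Markov step correctly produces the $r_2^{-|\bbbeta|}$ factor, but the Cauchy step has already forced a $\sup$ over $\Sm$ in the spin variable, and passing from $\Sm$ to the small ball $B(\bu,r_2)$ is not free. To make your decomposition work you would need to arrange that the bound coming out of the Cauchy representation already controls $f$ only on $B(\bu,r_2)$ in the spin slot --- which is exactly what the paper's rescaling $\bu\mapsto\bu+r_2\bbeta$ achieves before the kernel bound is ever invoked.
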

\begin{proof}
Firstly, we consider the case $\Omega=\Bm$, and let $||f||_{L^{\infty}(\Bm\times\Bm,\Mk)}=M'$, then for $\bu\in\Bm$, we have
\begin{align*}
&|D_{\boy}^{\balpha}D_{\bu}^{\bbbeta}f(\boy,\bu)|\\
=&\bigg\vert\int_{\Sm}\int_{\Sm}\big(D_{\boy}^{\balpha}D_{\bu}^{\bbbeta}E_k(\bx-\boy,\bu,\bov)\big)n(\bx)f(\bx,\bov)dS(\bov)d\sigma(\bx)\bigg\vert\\
\leq&\int_{\Sm}\int_{\Sm}\big\vert D_{\boy}^{\balpha}D_{\bu}^{\bbbeta}E_k(\bx-\boy,\bu,\bov)\big\vert dS(\bov)d\sigma(\bx)\cdot M'.
\end{align*}
Recall that
\begin{align*}
E_{k}(\bx-\boy,\bu,\bov)=\frac{1}{c_{m,k}}\frac{\bx-\boy}{|\bx-\boy|^m}Z_k\bigg(\frac{(\bx-\boy)\bu(\bx-\boy)}{|\bx-\boy|^2},\bov\bigg),
\end{align*}
and $Z_k(\bu,\bov)$ is a homogeneous polynomial of degree $k$ with respect to $\bu,\bov$. This implies that $E_{k}(\bx-\boy,\bu,\bov)$ has no singular points for $\bx,\bov\in\Sm$, and  
\begin{align*}
|D_{\boy}^{\balpha}D_{\bu}^{\bbbeta}E_k(\bx-\boy,\bu,\bov)|
\end{align*}
is bounded for fixed $\boy,\bu\in\Bm$ and $\bx,\bov\in\Sm$. Therefore, there exists a constant $c_{\balpha,\bbbeta,m,k}$, which only depends on $\balpha,\bbbeta,m$ and $k$, such that
\begin{align*}
\int_{\Sm}\int_{\Sm}\big\vert D_{\boy}^{\balpha}D_{\bu}^{\bbbeta}E_k(\bx-\boy,\bu,\bov)\big\vert dS(\bov)d\sigma(\bx)\leq c_{\balpha,\bbbeta,m,k}.
\end{align*}
Hence, we have that 
\begin{align*}
|D_{\boy}^{\balpha}D_{\bu}^{\bbbeta}f(\boy,\bu)|\leq c_{\balpha,\bbbeta,m,k} M',
\end{align*}
 for all $(\boy,\bu)\in\Bm\times\Bm$. Now, we consider $(\boy,\bu)\in B(\boy,r_1)\times B(\bu,r_2)$. We assume that $R_kf=0$ in $\Omega\times\Bm$ and $||f||_{L^{\infty}(B(\boy,r_1)\times B(\bu,r_2),\Mk))}=M$. Then, we apply the argument above to the function $f(\boy+r_1\bzeta,\bu+r_2\bbeta)$ with respect to $\bzeta,\bbeta\in\Bm$, we obtain that
\begin{align*}
|D_{\boy}^{\balpha}D_{\bu}^{\bbbeta}f(\boy,\bu)|\leq\frac{c'_{\balpha,\bbbeta,m,k}M}{r_1^{|\balpha|}r_2^{|\bbbeta|}},
\end{align*}	
which completes the proof.
\end{proof}
One can also obtain Cauchy's estimates with an $L^1$ norm as follows.
\begin{theorem}[Cauchy's estimates with $L^1$ norm]
Let $\balpha,\bbbeta$ be multi-indices. Assume $f\in C^1(\Omega\times\Bm,\Mk)$ and $R_kf=0$ in $\Omega\times\Bm$. Then, there exists a constant $c_{\balpha,\bbbeta,m,k}$ such that for any $\overline{B(\boy,r_1)}\subset\Omega$ and $\bu\in\Bm$, we have
\begin{align*}
|D_{\bu}^{\bbbeta}D_{\boy}^{\balpha}f(\boy,\bu)|\leq \frac{c_{\balpha,\bbbeta,m,k}||f||_{L^1(B(\boy,r_1)\times B(0,1-\frac{r_2}{4}),\Mk)}}{r_1^{m+|\balpha|}r_2^{m+|\bbbeta|}},
\end{align*}
where $r_2$ stands for the distance from $\bu\in\Bm$ to the boundary $\Sm$ and 
\begin{align*}
||f||_{L^1(B(\boy,r_1)\times B(0,1-\frac{r_2}{4}),\Mk)}:=\int_{B(\boy,r_1)}\int_{B(0,1-\frac{r_2}{4})}|f(\boy,\bu)|d\bu d\boy.
\end{align*}
\end{theorem}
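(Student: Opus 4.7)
The plan is to combine the pointwise $L^\infty$ Cauchy estimate of Theorem \ref{CE} with two mean value identities, one in each variable, so as to upgrade the sup-norm on the right-hand side to an $L^1$-norm in both $\boy$ and $\bu$ separately. This parallels the classical passage from $L^\infty$ to $L^1$ Cauchy estimates for harmonic functions; the novelty is that the two directions use different MVPs: the volume form of Theorem \ref{MVP1} for the $\boy$-variable, and the classical harmonic MVP together with $k$-homogeneity for the $\bu$-variable.

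I first apply Theorem \ref{CE} at $(\boy,\bu)$ with balls of radii $r_1/2$ and $r_2/2$ to get
\begin{align*}
|D_{\boy}^{\balpha}D_{\bu}^{\bbbeta}f(\boy,\bu)|\le\frac{c_1}{r_1^{|\balpha|}r_2^{|\bbbeta|}}||f||_{L^\infty(B(\boy,r_1/2)\times B(\bu,r_2/2),\Mk)},
\end{align*}
so it suffices to estimate $|f(\boy',\bu')|$ at an arbitrary point of this smaller box. For the $\boy$-direction I apply the volume MVP at $\boy'$ with radius $r_1/2$. The integrand involves $P_{k,\bu'}f(\bx,g(\bu'))$ with $g(\bu')=(\bx-\boy')\bu'\widetilde{(\bx-\boy')}/|\bx-\boy'|^2$, which is a harmonic polynomial of degree $k$ in $\bu'$ since $g$ is (up to sign) an orthogonal reflection. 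Using boundedness of the projection $P_k:\mathcal{H}_k\to\Mk$, that $g$ sends $\Sm$ to $\Sm$, and $k$-homogeneity, I obtain
\begin{align*}
|f(\boy',\bu')|\le\frac{c_2}{r_1^m}\int_{B(\boy',r_1/2)}\sup_{\bov\in\Sm}|f(\bx,\bov)|\,d\bx.
\end{align*}

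For the $\bu$-direction, since $f(\bx,\cdot)\in\Mk$ is monogenic in $\bu$ and hence harmonic, I would like to apply the classical harmonic MVP. Doing so directly at a point on $\Sm$ is impossible because no ball around such a point sits inside $\Bm$, so I first use $k$-homogeneity: for $\bov_0\in\Sm$, I write $f(\bx,\bov_0)=(1-r_2/2)^{-k}f(\bx,(1-r_2/2)\bov_0)$ and apply the harmonic MVP at the shifted point $(1-r_2/2)\bov_0$ with radius $r_2/4$. Since $B((1-r_2/2)\bov_0,r_2/4)\subset B(0,1-r_2/4)$, this yields
\begin{align*}
\sup_{\bov\in\Sm}|f(\bx,\bov)|\le\frac{c_3}{r_2^m}||f(\bx,\cdot)||_{L^1(B(0,1-r_2/4))},
\end{align*}
after absorbing the harmless constant $(1-r_2/2)^{-k}\le 2^k$ into $c_3$.

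Substituting the $\bu$-direction estimate into the $\boy$-direction estimate, using $B(\boy',r_1/2)\subset B(\boy,r_1)$ for $\boy'\in B(\boy,r_1/2)$, and plugging the result into the $L^\infty$ Cauchy bound produces the claim with $c_{\balpha,\bbbeta,m,k}=c_1c_2c_3$. The main obstacle is the $\bu$-direction argument: the radii $r_2/2$, $r_2/4$ and the shift $(1-r_2/2)\bov_0$ must be finely balanced so that the harmonic MVP-ball simultaneously lies inside the target domain $B(0,1-r_2/4)$ and produces the sharp factor $r_2^{-m}$. This tight geometry is exactly what dictates the shape of the integration region appearing in the statement.
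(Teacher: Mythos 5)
Your proof is correct and follows essentially the same route as the paper's: apply Theorem \ref{CE} with half-radii, then the volume MVP of Theorem \ref{MVP1} in the $\boy$-variable, then the classical harmonic MVP in the $\bu$-variable. The one technical difference is in the $\bu$-step: you pull $\bov_0\in\Sm$ inward to $(1-r_2/2)\bov_0$ by $k$-homogeneity and use a ball of radius $r_2/4$, which is exactly the radius needed for the containment in $B(0,1-r_2/4)$, and you explicitly invoke boundedness of $P_k:\Hk\to\Mk$ rather than discarding the projection pointwise; the paper instead evaluates at the reflected point $\bu_2$ with radius $r_2/2$, though since $|\bu_2|=|\bu_1|$ can be as large as $1-r_2/2$, the stated containment $B(\bu_2,r_2/2)\subset B(0,1-r_2/4)$ actually requires radius $r_2/4$, so your version quietly corrects a small slip.
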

\begin{proof}
Let $M=||f||_{L^{\infty}(B(\boy,\frac{r_1}{2})\times B(\bu,\frac{r_2}{2}),\Mk)}<\infty$. Then, we apply the argument in the previous theorem above to $f(\boy+\frac{r_1}{2}\bzeta,\bu+\frac{r_2}{2}\bbeta)$ with respect to $\bzeta,\bbeta\in\Bm$ to have
\begin{align}\label{Meqn0}
|D_{\bu}^{\balpha}D_{\boy}^{\bbbeta}f(\boy,\bu)|\leq \frac{c_{\balpha,\bbbeta,m,k}M}{r_1^{|\balpha|}r_2^{|\bbbeta|}}.
\end{align}
Now, we assume that $f$ obtains its maximum value $M$ over $B(\boy,\frac{r_1}{2})\times B(\bu,\frac{r_2}{2})$ at the point $(\boy_1,\bu_1)$. One can easily see that
\begin{align*}
B\bigg(\boy_1,\frac{r_1}{2}\bigg)\subset B(\boy,r_1),\ B\bigg(\bu_1,\frac{r_2}{2}\bigg)\subset B\bigg(\bu,\frac{3r_2}{4}\bigg).
\end{align*}
Therefore, with the mean value property in Theorem \ref{MVP1}, we have
\begin{align}\label{Meqn}
M=&|f(\boy_1,\bu_1)|\nonumber\\
=&\frac{m+2k-2}{(m-2)V(B(\boy_1,\frac{r_1}{2}))}\bv\int_{B(\boy_1,\frac{r_1}{2})}P_{k,\bu_1}f\bigg(\boy,\frac{(\boy-\boy_1)\bu_1(\boy-\boy_1)}{|\boy-\boy_1|^2}\bigg)d\boy\bv\nonumber\\
\leq&\frac{m+2k-2}{(m-2)V(B(\boy_1,\frac{r_1}{2}))}\int_{B(\boy_1,\frac{r_1}{2})}\bv f\bigg(\boy,\frac{(\boy-\boy_1)\bu_1(\boy-\boy_1)}{|\boy-\boy_1|^2}\bigg)\bv d\boy.
\end{align}
Now, we denote $\bu_2=\frac{(\boy-\boy_1)\bu_1(\boy-\boy_1)}{|\boy-\boy_1|^2}$, which is a reflection of $\bu_1$ in the direction of $\boy-\boy_1$. This implies that $f(\boy,\bu_2)$ is harmonic with respect to $\bu_2$. Further, we notice that $B(\bu_2,\frac{r_2}{2})\subset B(0,1-\frac{r_2}{4})$, hence, with the mean value property of harmonic functions, we have 
\begin{align*}
|f(\boy,\bu_2)|=&\frac{1}{V(B(\bu_2,\frac{r_2}{2}))}\bv\int_{B(\bu_2,\frac{r_2}{2})}f(\boy,\bu)d\bu\bv\\
\leq& \frac{c_m}{r_2^m}\int_{B(0,1-\frac{r_2}{4})}|f(\boy,\bu)|d\bu,
\end{align*}
where $c_m$ is a constant only depending on $m$. 
Therefore, with \eqref{Meqn}, we have
\begin{align*}
M=&|f(\boy_1,\bu_1)|\leq \frac{c'}{r_2^mV(B(\boy_1,\frac{r_1}{2}))}\int_{B(\boy_1,\frac{r_1}{2})}\int_{B(0,1-\frac{r_2}{4})}|f(\boy,\bu)|d\boy d\bu\\
=&c''r_1^{-m}r_2^{-m}||f||_{L^1(B(\boy,r_1)\times B(0,1-\frac{r_2}{4}),\Mk)},
\end{align*}
where the last equation is because of the fact that $B(\boy_1,\frac{r_1}{2})\subset B(\boy,r_1)$ and $c',c''$ are positive constants only depending on $m$ and $k$. Combining with equation \eqref{Meqn0}, we eventually have
\begin{align*}
|D_{\bu}^{\bbbeta}D_{\boy}^{\balpha}f(\boy,\bu)|\leq \frac{c_{\balpha,\bbbeta,m,k}||f||_{L^1(B(\boy,r_1)\times B(0,1-\frac{r_2}{4}),\Mk)}}{r_1^{m+|\balpha|}r_2^{m+|\bbbeta|}},
\end{align*}
which completes the proof.
\end{proof}
%%%%%%%%%%%%%%%%%%%%%%%%%%%%%%%%%%%%%%%%
Now, we can obtain an analog of Liouville's theorem for fermionic functions as follows.
\begin{theorem}[Liouville's theorem] Let $f\in C^1(\Rm\times\Bm,\Mk)\cap L^{\infty}(\Rm\times\Bm,\Mk)$, and $R_kf(\bx,\bu)=0$ for all $(\bx,\bu)\in\Rm\times\Bm$. Then, we must have $f=f(\bu)\in \Mk(\bu)$.
\end{theorem}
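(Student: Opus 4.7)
The strategy is the same as in the classical Liouville theorem for bounded entire harmonic or monogenic functions: derive the conclusion from Cauchy's estimates by letting the radius tend to infinity. Since $f$ is globally bounded on $\mathbb{R}^m\times\mathbb{B}^m$, the $L^{\infty}$ Cauchy estimate (Theorem \ref{CE}) is the natural tool. The main point is to show that every first-order partial derivative in $\boy$ vanishes identically; the dependence on $\bu$ takes care of itself because $f(\boy,\cdot)\in\Mk$ by hypothesis.

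First I would fix an arbitrary point $(\boy,\bu)\in\Rm\times\Bm$ and pick a multi-index $\balpha$ with $|\balpha|\geq 1$, together with $\bbbeta=0$. Because $R_kf=0$ on all of $\Rm\times\Bm$, for any $r_1>0$ the ball $\overline{B(\boy,r_1)}$ is contained in $\Omega=\Rm$, so Theorem \ref{CE} applies with a fixed admissible $r_2>0$ (any $r_2$ with $\overline{B(\bu,r_2)}\subset\Bm$ works, and if $\bu$ lies on the boundary we first move to a nearby interior point and use continuity, or observe that elements of $\Mk$ are polynomials in $\bu$ and hence automatically extend across the boundary). This gives
\begin{align*}
|D_{\boy}^{\balpha}f(\boy,\bu)|\leq\frac{c_{\balpha,0,m,k}\,\|f\|_{L^{\infty}(B(\boy,r_1)\times B(\bu,r_2),\Mk)}}{r_1^{|\balpha|}r_2^{0}}\leq\frac{c_{\balpha,0,m,k}\,\|f\|_{L^{\infty}(\Rm\times\Bm,\Mk)}}{r_1^{|\balpha|}}.
\end{align*}

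Next I would let $r_1\to\infty$. Because $f\in L^{\infty}(\Rm\times\Bm,\Mk)$ the numerator stays uniformly bounded while the denominator tends to infinity, so $D_{\boy}^{\balpha}f(\boy,\bu)=0$. Since $(\boy,\bu)$ was arbitrary and $\balpha$ was an arbitrary multi-index of positive length, every partial derivative of $f$ in the $\boy$-variables vanishes identically. Therefore $f(\boy,\bu)$ is independent of $\boy$, i.e.\ $f(\boy,\bu)=g(\bu)$ for some function $g$. The hypothesis that $f(\bx,\cdot)\in\Mk$ for each fixed $\bx$ forces $g\in\Mk$, yielding $f=f(\bu)\in\Mk(\bu)$ as claimed.

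There is really no serious obstacle here; the only point requiring a touch of care is how one handles the $\bu$-variable at the boundary of $\Bm$. Since the target space $\Mk$ consists of $k$-homogeneous polynomials, the values of $f(\boy,\cdot)$ at boundary points of $\Bm$ are determined by interior values, so the restriction $\overline{B(\bu,r_2)}\subset\Bm$ in Theorem \ref{CE} costs nothing and can be arranged for every $\bu$ of interest.
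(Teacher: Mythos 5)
Your proposal is correct and follows essentially the same route as the paper: fix a ball $B(\boy,r_1)\subset\Rm$, invoke the $L^{\infty}$ Cauchy estimate (Theorem \ref{CE}) with the global bound on $f$, and send $r_1\to\infty$ to kill every $\boy$-derivative, forcing $f=f(\bu)\in\Mk(\bu)$. The only (harmless) difference is cosmetic: the paper only needs first-order derivatives $\partial_{x_i}f$ while you argue for arbitrary multi-indices $\balpha$ with $|\balpha|\geq 1$, and you explicitly flag the $\bu$-boundary point that the paper leaves implicit.
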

\begin{proof}
Let $B(\bx,r)\subset\Rm$ be an arbitrary ball and $M=L^{\infty}(\Rm\times\Bm,\Mk)$. Theorem \ref{CE} tells us that
\begin{align*}
\bigg\vert\frac{\partial}{\partial x_i}f(\bx,\bu)\bigg\vert\leq \frac{c_{m,k}M}{r},
\end{align*}
for $i=1,\cdots,m$. Letting $r\rightarrow \infty$, we have $|\frac{\partial}{\partial x_i}f(\bx,\bu)|=0$ for $i=1,\cdots,m$, which implies that $f$ is independent to $x_i, i=1,\ldots,m$. In other words, we must have $f=f(\bu)\in \Mk(\bu)$.
\end{proof}
An immediate consequence of Liouville's theorem above is the following.
\begin{corollary}
Let $l\geq 1$ be an integer. Suppose $f\in C^1(\Rm\times\Bm,\Mk)$, $R_kf=0$ in $\Rm\times\Bm$ and
\begin{align*}
||f||_{L^1(B(0,R)\times\Bm,\Mk)}=o(R^{m+l}),\ \text{as}\ r\rightarrow\infty.
\end{align*}
Then, $f$ is a polynomial of $\bx$ with degree less than $l$.
\end{corollary}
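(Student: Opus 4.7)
The plan is to derive this corollary as a direct consequence of the $L^1$ Cauchy estimate proved just above. The classical Liouville-type philosophy is that a sufficiently slow $L^1$ growth of $f$ at infinity forces every partial derivative of $f$ of order $l$ in $\bx$ to vanish identically, which is equivalent to $f(\cdot,\bu)$ being a polynomial of degree strictly less than $l$ in $\bx$.

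I would fix an arbitrary point $(\boy_0,\bu_0)\in\Rm\times\Bm$ and an arbitrary multi-index $\balpha$ with $|\balpha|=l$, and let $r_2$ denote the (positive, fixed) distance from $\bu_0$ to $\Sm$. Applying the $L^1$ Cauchy estimate at $(\boy_0,\bu_0)$ with $|\bbbeta|=0$ and $r_1=R$ chosen large enough that $B(\boy_0,R)\subset B(0,2R)$ yields
\begin{align*}
|D_{\boy}^{\balpha}f(\boy_0,\bu_0)|\leq\frac{c_{\balpha,0,m,k}\,\|f\|_{L^1(B(\boy_0,R)\times B(0,1-r_2/4),\Mk)}}{R^{m+l}\,r_2^m}.
\end{align*}
The numerator is bounded above by $\|f\|_{L^1(B(0,2R)\times\Bm,\Mk)}$, which by hypothesis is $o((2R)^{m+l})=o(R^{m+l})$. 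Hence the right-hand side tends to $0$ as $R\to\infty$, and since $(\boy_0,\bu_0)$ and $\balpha$ with $|\balpha|=l$ were arbitrary, every partial derivative of $f$ of total order $l$ in $\bx$ vanishes identically on $\Rm\times\Bm$.

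To finish, I would invoke a standard Taylor expansion in $\bx$: for each fixed $\bu\in\Bm$ the function $f(\cdot,\bu)$ is smooth with all $l$-th order partial derivatives vanishing, so it is a polynomial in $\bx$ of degree at most $l-1$ whose coefficients depend on $\bu$. Because $f(\bx,\cdot)\in\Mk$ for every $\bx$, these coefficients themselves lie in $\Mk$, and the conclusion follows.

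The only subtlety I anticipate is the bookkeeping with the two radii: $r_2$ must be kept fixed (it is governed by the fixed point $\bu_0$) while $r_1=R$ is pushed to infinity, and the integration domain must be enlarged from $B(\boy_0,R)$ to $B(0,2R)$ so that the little-$o$ growth hypothesis can be applied uniformly. No deeper obstacle is expected, since the heavy lifting has already been carried out in the $L^1$ Cauchy estimate.
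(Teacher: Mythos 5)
Your argument is correct and follows essentially the same route as the paper: apply the $L^1$ Cauchy estimate with $|\balpha|=l$, fixed $r_2$, and $r_1=R$, enlarge the integration domain to a ball centred at the origin so the little-$o$ hypothesis applies, and let $R\to\infty$ to kill all $l$-th order $\bx$-derivatives. The only cosmetic difference is that the paper writes the enlarged ball as $B(0,R+2r)$ with $r=|\boy_0|$ fixed, whereas you use $B(0,2R)$ for $R$ large; both give $o(R^{m+l})$ and the conclusion is identical.
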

\begin{proof}
Let $(\boy_0,\bu)$ be an arbitrary point in $\Rm\times\Bm$ and $B(\bu,r_2)\in\Bm$. Now, we denote $|\boy_0|=r$, then one can see that $B(\boy_0,R)\subset B(0,R+2r)$. In accordance to the Cauchy's estimates, for any multi-index $\balpha$, we have
\begin{align*}
&|D_{\boy}^{\balpha}f(\boy_0,\bu)|\leq \frac{c_{\balpha,\bbbeta,m,k}||f||_{L^1(B(\boy_0,R)\times\Bm,\Mk)}}{R^{m+|\balpha|}r_2^{m}}\\
\leq& \frac{c_{\balpha,\bbbeta,m,k}||f||_{L^1(B(0,R+2r)\times\Bm,\Mk)}}{R^{m+|\balpha|-1}}= \frac{c_{\balpha,\bbbeta,m,k}o((R+2r)^{m+l})}{R^{m+|\balpha|}r_2^{m}},\ \text{as}\ R\rightarrow \infty.
\end{align*}
If we let $|\balpha|=l,\ R\rightarrow\infty$, and since $r_2$ is fixed, we can see that $|D_{\boy}^{\balpha}f(\boy_0,\bu)|=0$ for any $\balpha=l$ and $(\boy,\bu)\in\Rm\times\Bm$. This immediately tells us that $f$ is a polynomial of $\boy$ with degree less than $l$.
\end{proof}
%%%%%%%%%%%%%%%%%%%%%%%%%%%%%%%%%%
\section{Teodorescu transform and its properties}
In \cite[Theorem 7]{DJR}, the authors introduced the Cauchy-Pompeiu formula in higher spin theory, and they also introduced the Teodorescu transform as
\begin{align*}
T_kf(\boy,\bu)=-\int_{\Omega}\int_{\Sm}E_k(\bx-\boy,\bu,\bov)f(\bx,\bov)dS(\bov)d\bx,
\end{align*}
for $f\in C^1(\Omega\times \Bm,\Mk)$. Further, they also pointed out that $T_k$ is a right inverse to $R_k$ for $f\in C_0^{\infty}(\R^m\times \Bm,\Mk)$.
\par
It is worth pointing out that there is a type error in \cite[Definition 2, Theorem 10]{DJR}, and that is $\bu$ and $\bov$ should be switched there. One can check that at the very end of the proof of Theorem $10$, the authors used the fact that $Z_k(\bu,\bov)$ is the reproducing kernel of $k$-homogeneous spherical monogenic polynomials to obtain $\phi$ there, but one can easily see that $\bu$ and $\bov$ should be switched based on the equation given above Lemma $5$ in page $11$ there. That is the reason that our $T_k$ here is different from the one given in \cite{DJR}. Further, for a bounded domain $\Omega\in\R^m$, the condition for the function space in \cite[Theorem 10]{DJR} can be $C^1(\Omega\times \Bm,\Mk)$ with exactly the same argument there. For convenience, we restate that $T_k$ is a right inverse of $R_k$ correctly as follows.
\begin{theorem}
Let $\Omega\subset\R^m$ be a domain, and $f\in C^1(\Omega\times \Bm,\Mk)$. Then, we have
\begin{align*}
R_kT_kf(\boy,\bu)=-R_k\int_{\Omega}\int_{\Sm}E_k(\bx-\boy,\bu,\bov)f(\bx,\bov)dS(\bov)d\bx=f(\boy,\bu).
\end{align*}
\end{theorem}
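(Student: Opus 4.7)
The approach is to make rigorous the distributional identity $R_{k,\boy} E_k(\bx - \boy, \bu, \bov) = -\delta(\bx - \boy) Z_k(\bu, \bov)$, and then to invoke the reproducing property of $Z_k$ on $\Mk$ to extract $f(\boy, \bu)$. Heuristically,
\begin{align*}
R_k T_k f(\boy, \bu) &= -\int_{\Omega} \int_{\Sm} R_{k,\boy} E_k(\bx - \boy, \bu, \bov) f(\bx, \bov) dS(\bov) d\bx \\
&= \int_{\Sm} Z_k(\bu, \bov) f(\boy, \bov) dS(\bov) = f(\boy, \bu),
\end{align*}
where the last equality uses that $f(\boy, \cdot) \in \Mk$.

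To rigorize, I fix an interior $\boy_0 \in \Omega$ and choose $\epsilon > 0$ with $\overline{B(\boy_0, 2\epsilon)} \subset \Omega$. For $\boy$ varying in the neighborhood $B(\boy_0, \epsilon)$, I decompose $T_k f = I_\epsilon + J_\epsilon$, where $I_\epsilon$ integrates $-E_k f$ over the $\boy$-independent outer region $\Omega \setminus B(\boy_0, 2\epsilon)$ and $J_\epsilon$ over the fixed ball $B(\boy_0, 2\epsilon)$; keeping the domains independent of $\boy$ lets me differentiate under the integral sign without moving-boundary corrections. On the outer region $|\bx - \boy| \geq \epsilon$ uniformly, so $E_k(\bx - \boy, \bu, \bov)$ is smooth in $(\boy, \bu)$; using $D_{\boy} E_k(\bx - \boy, \cdot) = -D_{\bx} E_k(\bx - \boy, \cdot)$ together with the fact that $E_k$ is a null solution of $R_k$ away from its origin, differentiation under the integral yields $R_k I_\epsilon \equiv 0$.

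For $J_\epsilon$, I write $f(\bx, \bov) = f(\boy, \bov) + [f(\bx, \bov) - f(\boy, \bov)]$ using $f \in C^1$. Since $|E_k(\bx - \boy, \bu, \bov)| \leq C|\bx - \boy|^{1-m}$ uniformly in $\bov \in \Sm$ and $|f(\bx, \bov) - f(\boy, \bov)| \leq C|\bx - \boy|$, the difference-piece integrand is of order $|\bx - \boy|^{2-m}$; after applying the first-order operator $R_k$ the order becomes $|\bx - \boy|^{1-m}$, still locally integrable, and this piece contributes $o(1)$ as $\epsilon \to 0$. The remaining constant-in-$\bx$ piece is handled by Stokes in Clifford form on the annulus $B(\boy_0, 2\epsilon) \setminus \overline{B(\boy, \delta)}$ with $\delta < \epsilon$, reducing the bulk integral of $R_{k,\boy}[E_k f(\boy, \bov)]$ to a surface integral over $\partial B(\boy, \delta)$. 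Passing $\delta \to 0$ and using the change of variable $\bx - \boy = \delta \bt$ on $|\bt|=1$, the surface limit is evaluated by exactly the mechanism $\tilde{\bt} P_{k, \bov} \bt = P_{k, \bu}$ with $\bov = \bt \bu \tilde{\bt}$ used in the proof of Theorem \ref{MVP1}; the reproducing property of $Z_k$ on $\Mk$ then yields $f(\boy, \bu)$.

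The main obstacle is the nonintegrable order $|\bx - \boy|^{-m}$ of $R_{k,\boy} E_k$, which prevents a naive exchange of $R_k$ with the singular integral. The $C^1$ regularity of $f$ is used in an essential way to gain one order of decay in the split $f(\bx, \bov) = f(\boy, \bov) + O(|\bx - \boy|)$, and the normalization constant $c_{m,k} = (m-2)\omega_{m-1}/(m+2k-2)$ in \eqref{fund} is calibrated precisely so that the $\delta \to 0$ limit of the surface integral yields $f(\boy, \bu)$ rather than a nontrivial scalar multiple of it, matching the convention fixed by the Cauchy integral formula and Theorem \ref{MVP1}.
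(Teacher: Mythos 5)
The paper itself does not supply a proof of this theorem: it restates \cite[Theorem 10]{DJR} after correcting a typo (the roles of $\bu$ and $\bov$) and remarks that the same argument there works under the weaker hypothesis $f\in C^1$ for a bounded domain. So there is no internal proof to compare against; what you have produced is a self-contained direct argument.

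Your argument is essentially correct, and is the standard cut-and-limit (Stokes on a punctured domain) technique that underlies the cited result and the computation of $\partial_{y_i}T_kf$ carried out in Section 5 of this paper. The decomposition $T_kf=I_\epsilon+J_\epsilon$ with $\boy$-independent domains $\Omega\setminus B(\boy_0,2\epsilon)$ and $B(\boy_0,2\epsilon)$ neatly sidesteps moving-boundary corrections; the split $f(\bx,\cdot)=f(\boy,\cdot)+O(|\bx-\boy|)$ is the right use of the $C^1$ hypothesis; and the surface-limit evaluation via $\tilde{\bt}P_{k,\bov}\bt=P_{k,\bu}$ and the reproducing property of $Z_k$ is precisely the mechanism of Theorem \ref{MVP1}. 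Two points are worth tightening. First, Stokes on the annulus $B(\boy_0,2\epsilon)\setminus\overline{B(\boy,\delta)}$ produces \emph{two} boundary contributions, not just the one on $\partial B(\boy,\delta)$ that you mention; the $\partial B(\boy_0,2\epsilon)$ term is $O(1)$, not negligible. The accounting closes because after applying $P_{k,\bu}$ the bulk integrand on the annulus vanishes identically (since $R_{k,\boy}E_k(\bx-\boy,\cdot)=0$ for $\bx\ne\boy$), so the two surface terms are in fact equal for all $\epsilon,\delta$ and both equal the $\delta\to0$ residue; this should be stated rather than left implicit. Second, your phrase ``the nonintegrable order $|\bx-\boy|^{-m}$ of $R_{k,\boy}E_k$'' is slightly off: $R_{k,\boy}E_k$ is pointwise zero away from $\bx=\boy$ (it is the unprojected $D_{\boy}E_k$ that has the $|\bx-\boy|^{-m}$ growth), and the genuine obstruction is that differentiation under the integral sign fails because $D_{\boy}E_k$ is not locally integrable; the delta contribution is recovered precisely by the cut-and-limit. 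Finally, note that for $T_kf$ to even be defined one should assume $\Omega$ bounded (as the surrounding discussion in the paper does), which your proof uses implicitly when asserting convergence of $I_\epsilon$.
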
 
Now, we claim that $T_k$ is a bounded operator on $ L^p(\Omega\times \Bm,\Mk)$ with $p>1$. More specifically,
\begin{theorem}\label{Tkb}
Let $\Omega\subset\R^m$ be a domain, and $f\in L^p(\Omega\times \Bm,\Mk)$ with $p>1$. Then, we have
\begin{align*}
||T_kf||_{L^p(\Omega\times \Bm,\Mk)}\leq C||f||_{L^p(\Omega\times \Bm,\Mk)}.
\end{align*}
\end{theorem}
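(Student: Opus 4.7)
The approach I would take is to reduce the $L^p$-boundedness of $T_k$ to that of the scalar weakly singular integral operator with kernel $|\bx-\boy|^{-(m-1)}$ on the spatial domain $\Omega$. The whole proof rests on a very clean pointwise estimate for the kernel $E_k$ which allows the $\bu$ and $\bov$ variables to be effectively decoupled from the essential analysis in $\bx,\boy$. The statement implicitly requires $\Omega$ to be bounded (otherwise a Hardy--Littlewood--Sobolev obstruction appears and one only gains $L^p\to L^q$ with $1/q=1/p-1/m$); I would first make that assumption explicit, with the constant $C$ depending on $\Omega$.

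\textbf{Step 1 (kernel estimate).} Since $Z_k(\ba,\bb)$ is a Clifford-valued polynomial of degree $k$ in each argument, and since for $\bu\in\Bm$ and $\bx\neq\boy$ the norm of $\frac{(\bx-\boy)\bu(\bx-\boy)}{|\bx-\boy|^2}$ is bounded by $|\bu|\leq 1$, one can exhibit a constant $M_{m,k}>0$ with
\begin{align*}
|E_k(\bx-\boy,\bu,\bov)|\leq\frac{M_{m,k}}{|\bx-\boy|^{m-1}},\qquad \bu\in\Bm,\ \bov\in\Sm.
\end{align*}
\textbf{Step 2 (reduction to a spatial operator).} Inserting this estimate into the definition of $T_k$ gives
\begin{align*}
|T_kf(\boy,\bu)|\leq M_{m,k}\int_{\Omega}\frac{g(\bx)}{|\bx-\boy|^{m-1}}d\bx,\quad g(\bx):=\int_{\Sm}|f(\bx,\bov)|dS(\bov),
\end{align*}
and notably the right-hand side is independent of $\bu$. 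H\"older's inequality in $\bov$ bounds $g(\bx)^p\leq \omega_{m-1}^{p-1}\int_{\Sm}|f(\bx,\bov)|^pdS(\bov)$, and then the equivalence of the $\Sm$ and $\Bm$ norms for $k$-homogeneous functions (Remark 3.2) gives $\|g\|_{L^p(\Omega)}\lesssim \|f\|_{L^p(\Omega\times\Bm,\Mk)}$.

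\textbf{Step 3 (weakly singular kernel).} Since $\Omega$ is bounded and contained in some ball of radius $R$, one has the uniform estimates
\begin{align*}
\sup_{\boy}\int_{\Omega}\frac{d\bx}{|\bx-\boy|^{m-1}}\leq \omega_{m-1}(2R),\qquad \sup_{\bx}\int_{\Omega}\frac{d\boy}{|\bx-\boy|^{m-1}}\leq \omega_{m-1}(2R).
\end{align*}
Schur's test then yields that convolution against $|\cdot|^{-(m-1)}$ is bounded on $L^p(\Omega)$ for every $p>1$. Combining with Step 2 gives $\|T_kf(\cdot,\bu)\|_{L^p(\Omega)}\leq C'\|f\|_{L^p(\Omega\times\Bm,\Mk)}$ uniformly in $\bu\in\Bm$, and raising to the $p$-th power and integrating over $\Bm$ (which has finite volume) finishes the proof.

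The main subtle point I expect is neither the singularity of $E_k$ nor the $L^p$ theory, both of which are standard, but rather bookkeeping of the $\bu$ variable: one has to verify that $|E_k(\bx-\boy,\bu,\bov)|$ is controlled \emph{uniformly} for $\bu\in\Bm$ and $\bov\in\Sm$, which is where the polynomial nature of $Z_k$ and the $|\bu|\leq 1$ constraint are essential. Once that uniformity is established the $\bu$-integration on the left reduces to multiplication by $|\Bm|^{1/p}$, and the problem collapses to the classical weakly singular $L^p(\Omega)$ bound in the $\bx,\boy$ variables.
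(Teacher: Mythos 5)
Your proof is correct and follows the same overall strategy as the paper: extract a uniform bound on the zonal kernel $Z_k(\cdot,\bov)$ to reduce $|E_k(\bx-\boy,\bu,\bov)|$ to the weakly singular kernel $|\bx-\boy|^{-(m-1)}$, decouple the $\bov$-integral via H\"older, and then invoke $L^p$-boundedness of the spatial weakly singular operator. The one substantive difference is at that last step: the paper pulls out only $Z_k$, keeping the Clifford-valued kernel $\frac{\bx-\boy}{|\bx-\boy|^m}$, and then cites the known $L^p(\Omega)\to L^p(\Omega)$ boundedness of the classical Teodorescu transform (their reference [23]); you instead take absolute values and prove the boundedness yourself via Schur's test, which makes the argument self-contained at the cost of losing the algebraic structure (which is immaterial here, since only a norm bound is needed). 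You are also right to flag that boundedness of $\Omega$ is implicitly required: the statement in the paper says only ``domain,'' but for unbounded $\Omega$ one would only get $L^p\to L^q$ with $1/q=1/p-1/m$ by Hardy--Littlewood--Sobolev; the paper implicitly relies on boundedness both in the cited Teodorescu result and throughout the later sections, so your explicit caveat is a genuine correction rather than a deviation.
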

\begin{proof}
To prove the statement, we only need to notice that the reproducing kernel $Z_k(\bu,\bov)$ is bounded for all $\bu,\bov\in \Bm$, which gives us that
\begin{align*}
&||T_kf(\boy,\bu)||_{L^p(\Omega\times \Bm,\Mk)}\\
=&\bigg(\int_{\Omega}\int_{\Sm}\bv\int_{\Omega}\int_{\Sm}E_k(\bx-\boy,\bu,\bov)f(\bx,\bov)dS(\bov)d\bx\bv^pdS(\bu)d\boy\bigg)^{\frac{1}{p}}\\
\leq&C\bigg(\int_{\Omega}\int_{\Sm}\bv\int_{\Omega}\int_{\Sm}\frac{\bx-\boy}{|\bx-\boy|^m}f(\bx,\bov)dS(\bov)d\bx\bv^pdS(\bu)d\boy\bigg)^{\frac{1}{p}}\\
\leq&C\bigg(\int_{\Sm}\bigg[\int_{\Omega}\bv\int_{\Omega}\frac{\bx-\boy}{|\bx-\boy|^m}f(\bx,\bov)d\bx\bv^pd\boy\bigg] dS(\bov)\bigg)^{\frac{1}{p}}.
\end{align*}
Note that the term 
\begin{align*}
\int_{\Omega}\frac{\bx-\boy}{|\bx-\boy|^m}f(\bx,\bov)d\bx
\end{align*}
in the bracket above is actually the Teodoresu transform in the classical case with a parameter $\bov$ , see \cite[pp.151]{23}, and it is a bounded operator on $L^p(\Omega)$ with $p>1$. Therefore, the last integral above becomes
\begin{align*}
\leq&C\bigg(\int_{\Sm}\int_{\Omega}|f(\bx,\bov)|^pd\bx dS(\bov)\bigg)^{\frac{1}{p}}=C||f||_{L^p(\Omega\times \Bm,\Mk)}.
\end{align*}
\end{proof}
To establish a mapping property of $T_k$ on Sobolev spaces, we need to calculate the derivatives of $T_k$ first, then we should evaluate the mapping property of its derivatives, which is stated as follows.
\begin{theorem}
Let $\Omega\subset\R^m$ be a domain, and $f\in C^1(\Omega\times \Bm,\Mk)$,
we have
\begin{align*}
&\frac{\partial}{\partial y_i}T_kf(\boy,\bu)=\int_{\Omega}\int_{\Sm}\frac{\partial}{\partial y_i}E_k(\bx-\boy,\bu,\bov)f(\bx,\bov)dS(\bov)d\bx\\
&+c_{m,k}^{-1}\int_{|\bt|=1}t_i\bt f(\boy,\bt\bu\bt)d\sigma(\bt)\\
&-c_{m,k}^{-1}\int_{|\bt|=1}\frac{2\bt}{2-m}\bigg[u_i\langle\bt,D_{\bbeta}\rangle+\langle\bt,\bu\rangle\frac{\partial}{\partial\eta_i}+2t_i\langle\bt,\bu\rangle\langle\bt,D_{\bbeta}\rangle\bigg]f(\boy,\bt\bu\bt)d\sigma(\bt),
\end{align*}
where $\bbeta=\bt\bu\bt$. Further, we claim that
\begin{align*}
T_k:\ L^p(\Omega\times \Bm,\Mk)\longrightarrow W^{1}_p(\Omega\times\Bm,\Mk).
\end{align*}
\end{theorem}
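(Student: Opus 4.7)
The plan is to derive the pointwise identity for $C^1$ data via a regularization that excises the singularity of $E_k$, and then combine it with density and singular-integral bounds to obtain the mapping $T_k : L^p \to W^{1}_p$. I would begin with the regularized transform
\[
T_k^{\epsilon} f(\boy,\bu) = -\int_{\Omega \setminus B(\boy,\epsilon)} \int_{\Sm} E_k(\bx-\boy,\bu,\bov) f(\bx,\bov)\, dS(\bov)\, d\bx.
\]
Because the domain depends on $\boy$, differentiating in $y_i$ produces (i) the interior integral of $\partial_{y_i} E_k \cdot f$ over $\Omega \setminus B(\boy,\epsilon)$ and (ii) a boundary term on $\partial B(\boy,\epsilon)$ arising from moving the boundary.

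Parametrizing the latter by $\bx = \boy + \epsilon \bt$ with $|\bt| = 1$, the Jacobian $\epsilon^{m-1}$ cancels the singular factor $\epsilon^{-(m-1)}$ in $E_k$, giving a finite $\epsilon \to 0$ limit. Using the reproducing property of $Z_k$ on $\Mk$ together with the identity $\tilde{\bt} P_{k,\bov} \bt = P_{k,\bu}$ for $\bov = \bt \bu \tilde{\bt}$ (already exploited in the proof of Theorem~\ref{MVP1}), this limit evaluates to $c_{m,k}^{-1}\int_{|\bt|=1} t_i \bt f(\boy, \bt \bu \bt)\, d\sigma(\bt)$, which is the second line of the statement. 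For the third line I would apply the chain rule to the first argument of $Z_k$: setting $\bbeta = \bt \bu \bt$ and differentiating $\frac{(\bx-\boy)\bu(\bx-\boy)}{|\bx-\boy|^2}$ in $y_i$ gives a vector that, when paired against $\nabla_{\bbeta} Z_k(\bbeta,\bov)$, is re-expressible via the reproducing property as a first-order operator on $f$ in its second slot. The combination $u_i \langle \bt, D_{\bbeta}\rangle + \langle \bt,\bu\rangle \partial_{\eta_i} + 2 t_i \langle \bt,\bu\rangle \langle \bt, D_{\bbeta}\rangle$ and the prefactor $-2/(m-2)$ emerge from this computation once one uses $c_{m,k} = (m-2)\omega_{m-1}/(m+2k-2)$ and the degree-$k$ homogeneity of $Z_k$.

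For the Sobolev mapping I would pass from $C^1$ to $L^p$ by density once each piece of the derivative formula is shown to be $L^p$-bounded. The operator $T_k$ itself is $L^p$-bounded by Theorem~\ref{Tkb}. The principal-value piece with kernel $\partial_{y_i} E_k$ is a higher-spin analog of the Beurling--Ahlfors transform: its kernel factorizes as a classical Calder\'on--Zygmund kernel in $\boy$ times a smooth bounded function of $(\bx-\boy)/|\bx-\boy|$, $\bu$ and $\bov$. Componentwise Calder\'on--Zygmund theory combined with Minkowski's integral inequality in $\bov$ and $\bu$ then yields the needed bound. The two sphere integrals are easier: Minkowski's inequality in $\bt$ reduces them to $L^p$ norms of $f(\boy,\bt\bu\bt)$, and invariance of the $\Sm$ measure under the isometry $\bu \mapsto \bt\bu\bt$ identifies each with $\|f\|_{L^p}$ up to a constant.

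The main obstacle will be the algebraic bookkeeping in the chain-rule step: differentiating the Clifford quotient $\frac{(\bx-\boy)\bu(\bx-\boy)}{|\bx-\boy|^2}$ in $y_i$ while respecting the non-commutativity of Clifford multiplication, regrouping the result as a linear combination of directional derivatives on $\bbeta$, and then using the reproducing property of $Z_k$ to transfer them onto $f$ with exactly the stated coefficients. Once this explicit derivative identity is in hand, the $L^p$ estimates reduce to a mix of Calder\'on--Zygmund bounds and routine Minkowski-type arguments.
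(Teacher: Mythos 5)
Your plan diverges from the paper's argument in a structural way, and as written it cannot produce the third line of the stated derivative formula.

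The paper does not differentiate the regularized integral directly. It first uses the reproducing property to collapse the $\bov$-integral to $f(\bx,\bbeta)$ with $\bbeta=(\bx-\boy)\bu(\bx-\boy)/|\bx-\boy|^2$, then integrates by parts via $\frac{\bx-\boy}{|\bx-\boy|^m}=\frac{|\bx-\boy|^{2-m}}{2-m}D_\bx$ to obtain a representation of $T_kf$ with the weaker (integrable) Newtonian singularity, differentiates this representation in $y_i$, and only then applies Stokes' theorem again on $\Omega_\epsilon$ to recover a singular volume integral plus boundary integrals on $\partial B(\boy,\epsilon)$. The crucial point is that, in the intermediate representation, $f(\bx,\bbeta)$ depends on $\boy$ through $\bbeta$, so $\partial_{y_i}$ produces by the chain rule a term $\sum_s\frac{\partial\eta_s}{\partial y_i}\partial_{\eta_s}f(\bx,\bbeta)$. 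When Stokes is undone, this chain-rule piece survives on $\partial B(\boy,\epsilon)$ (the combination $\frac{|\bx-\boy|^{2-m}}{2-m}\cdot\frac{\partial\eta_s}{\partial y_i}\cdot\epsilon^{m-1}$ has total $\epsilon$-degree zero) and is precisely the source of the third sphere integral $F_3$.

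In your Reynolds-transport computation that mechanism never appears. The boundary flux term in the Leibniz formula is simply the undifferentiated integrand $g=-\int_{\Sm}E_k f\,dS(\bov)$ evaluated on $\partial B(\boy,\epsilon)$ and weighted by the normal velocity $v\cdot n=-t_i$. Parametrizing by $\bx=\boy+\epsilon\bt$, this gives exactly and only $c_{m,k}^{-1}\int_{|\bt|=1}t_i\,\bt\,f(\boy,\bt\bu\bt)\,d\sigma(\bt)$ in the limit — the second line. There is no second boundary contribution. When you then say the third line arises from applying the chain rule to the first argument of $Z_k$, you are describing a piece of the computation of $\partial_{y_i}E_k$, i.e.\ part of the \emph{interior} principal-value integral (the first line), not a separate sphere integral; within your scheme it cannot detach into a contribution living on $\{|\bt|=1\}$. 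So your two-term decomposition (p.v.\ plus one Reynolds boundary term) does not match the three-term identity you are trying to prove, and the argument as stated has a gap. You would either have to go through the paper's integration-by-parts route to make the $\partial_{\eta_s}f$ sphere integral appear, or else justify independently why your p.v.\ integral absorbs the discrepancy — neither of which is done here. (The subsequent $L^p$ estimates via Calder\'on--Zygmund plus Minkowski are in the same spirit as the paper, though the paper additionally uses the interior gradient estimate for harmonic functions to bound the $\partial_{\eta}$-terms in $F_3$ by $\|f\|_{L^p}$; you would need the analogous step.)
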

\begin{proof}
 Let $\epsilon>0$ be sufficiently small such that $B(\boy,\epsilon)\subset\Omega$. We denote $\Omega_{\epsilon}=\Omega\backslash B(\boy,\epsilon)$, and $\bo{\eta}=\displaystyle\frac{(\bx-\boy)\bu(\bx-\boy)}{|\bx-\boy|^2}$. Then, we have
\begin{align*}
&c_{m,k}T_kf(\boy,\bu)\\
=&-\lim_{{\epsilon}\rightarrow 0}\int_{\Omega_{\epsilon}}\int_{\Sm}\frac{\bx-\boy}{|\bx-\boy|^m}Z_k\bigg(\frac{(\bx-\boy)\bu(\bx-\boy)}{|\bx-\boy|^2},\bov\bigg)f(\bx,\bov)dS(\bov)d\bx\\
=&-\lim_{{\epsilon}\rightarrow 0}\int_{\Omega_{\epsilon}}\frac{\bx-\boy}{|\bx-\boy|^m}f\bigg(\bx,\frac{(\bx-\boy)\bu(\bx-\boy)}{|\bx-\boy|^2}\bigg)d\bx\\
=&-\lim_{{\epsilon}\rightarrow 0}\int_{\Omega_{\epsilon}}\bigg(\frac{|\bx-\boy|^{2-m}}{2-m}D_{\bx}\bigg)f(\bx,\bbeta)d\bx\\
=&\lim_{{\epsilon}\rightarrow 0}\int_{\Omega_{\epsilon}}\frac{|\bx-\boy|^{2-m}}{2-m}\big(D_{\bx}f(\bx,\bbeta)\big)d\bx-\int_{\partial\Omega_{\epsilon}}\frac{|\bx-\boy|^{2-m}}{2-m}n(\bx)f(\bx,\bbeta)d\sigma(\bx)\\
=&\int_{\Omega}\frac{|\bx-\boy|^{2-m}}{2-m}\big(D_{\bx}f(\bx,\bbeta)\big)d\bx-\int_{\partial\Omega}\frac{|\bx-\boy|^{2-m}}{2-m}n(\bx)f(\bx,\bbeta)d\sigma(\bx).
\end{align*}
It is easy to observe that one can interchange differentiation and integration above if we differentiate $T_kf$, which gives us that
\begin{align}\label{partial0}
&c_{m,k}\frac{\partial}{\partial y_i}T_kf(\boy,\bu)\nonumber\\
=&\int_{\Omega}\frac{y_i-x_i}{|\bx-\boy|^m}\big(D_{\bx}f(\bx,\bbeta)\big)+\frac{|\bx-\boy|^{2-m}}{2-m}\sum_{s=1}^m\frac{\partial \eta_s}{\partial y_i}\frac{\partial}{\partial\eta_s}D_{\bx}f(\bx,\bbeta)d\bx\nonumber\\
&-\int_{\partial\Omega}\frac{y_i-x_i}{|\bx-\boy|^m}n(\bx)f(\bx,\bbeta)+\frac{|\bx-\boy|^{2-m}}{2-m}n(\bx)\sum_{s=1}^m\frac{\partial \eta_s}{\partial y_i}\frac{\partial}{\partial\eta_s}f(\bx,\bbeta)d\sigma(\bx),
\end{align}
where
\begin{align}\label{eqn1}
&\bbeta=\displaystyle\frac{(\bx-\boy)\bu(\bx-\boy)}{|\bx-\boy|^2}=\bu-\frac{\langle\bu,\bx-\boy\rangle(\bx-\boy)}{|\bx-\boy|^2},\nonumber\\
& \eta_s=u_s-\frac{\langle\bu,\bx-\boy\rangle(x_s-y_s)}{|\bx-\boy|^2},\nonumber\\
&\frac{\partial\eta_s}{\partial y_i}=-\frac{2u_i(x_s-y_s)-2\delta_{is}\langle\bx-\boy,\bu\rangle}{|\bx-\boy|^2}+\frac{4\langle\bx-\boy,\bu\rangle(x_s-y_s)(y_i-x_i)}{|\bx-\boy|^4}.
\end{align}
With Stokes'  Theorem, we have
\begin{align*}
&\int_{\partial\Omega_{\epsilon}}\frac{y_i-x_i}{|\bx-\boy|^m}n(\bx)f(\bx,\bbeta)+\frac{|\bx-\boy|^{2-m}}{2-m}n(\bx)\sum_{s=1}^m\frac{\partial \eta_s}{\partial y_i}\frac{\partial f(\bx,\bbeta)}{\partial\eta_s}d\sigma(\bx)\\
=&\int_{\Omega_{\epsilon}}\bigg(\frac{y_i-x_i}{|\bx-\boy|^m}D_{\bx}\bigg)f(\bx,\bbeta)+\bigg(\frac{|\bx-\boy|^{2-m}}{2-m}D_{\bx}\bigg)\sum_{s=1}^m\frac{\partial \eta_s}{\partial y_i}\frac{\partial}{\partial\eta_s}f(\bx,\bbeta)d\bx\\
&+\int_{\Omega_{\epsilon}}\frac{y_i-x_i}{|\bx-\boy|^m}\big(D_{\bx}f(\bx,\bbeta)\big)+\frac{|\bx-\boy|^{2-m}}{2-m}\sum_{s=1}^mD_{\bx}\frac{\partial \eta_s}{\partial y_i}\frac{\partial}{\partial\eta_s}f(\bx,\bbeta)d\bx.
\end{align*}
Plugging into \eqref{partial0}, we have
\begin{align}\label{partial1}
&c_{m,k}\frac{\partial}{\partial y_i}T_kf(\boy,\bu)\nonumber\\
=&\lim_{{\epsilon}\rightarrow 0}\int\limits_{\partial B(\boy,\epsilon)}\frac{x_i-y_i}{|\bx-\boy|^m}n(\bx)f(\bx,\bbeta)-\frac{|\bx-\boy|^{2-m}}{2-m}n(\bx)\sum_{s=1}^m\frac{\partial \eta_s}{\partial y_i}\frac{\partial f(\bx,\bbeta)}{\partial\eta_s}d\sigma(\bx)\nonumber\\
&-\lim_{{\epsilon}\rightarrow 0}\int_{\Omega_{\epsilon}}\bigg(\frac{y_i-x_i}{|\bx-\boy|^m}D_{\bx}\bigg)f(\bx,\bbeta)+\bigg(\frac{|\bx-\boy|^{2-m}}{2-m}D_{\bx}\bigg)\sum_{s=1}^m\frac{\partial \eta_s}{\partial y_i}\frac{\partial f(\bx,\bbeta)}{\partial\eta_s}d\bx\nonumber\\
&+\lim_{{\epsilon}\rightarrow 0}\int_{B(\boy,\epsilon)}\frac{y_i-x_i}{|\bx-\boy|^m}\big(D_{\bx}f(\bx,\bbeta)\big)+\frac{|\bx-\boy|^{2-m}}{2-m}\sum_{s=1}^mD_{\bx}\frac{\partial \eta_s}{\partial y_i}\frac{\partial f(\bx,\bbeta)}{\partial\eta_s}d\bx.
\end{align}
To obtain the mapping property for the operator $T_k$, we notice that the space $C^{1}(\Omega\times\Bm,\Mk)$ is dense in $L^p(\Omega\times\Bm,\Mk)$, this is because $C_c^{\infty}(\Omega\times\Bm,\Mk)$ is dense in $L^p(\Omega\times\Bm,\Mk)$ like in the classical case, and $C_c^{\infty}(\Omega\times\Bm,\Mk)$  is a subset of $C^{1}(\Omega\times\Bm,\Mk)$ in $L^p(\Omega\times\Bm,\Mk)$. Hence, we can prove the mapping property of $T_k$ with the expression of $\frac{\partial}{\partial y_i}T_kf$ given in \eqref{partial1}.
\par
 Firstly, if we let $\bx-\boy=r\bt$ with $r>0, \bt\in\Sm$, then, the homogeneity of $r$ suggests that the last integral in \eqref{partial1} tends to zero when $\epsilon$ goes to zero. Secondly, we notice that
\begin{align*}
&\frac{y_i-x_i}{|\bx-\boy|^m}D_{\bx}=\frac{\partial}{\partial y_i}D_{\bx}\frac{|\bx-\boy|^{2-m}}{2-m}=\frac{\partial}{\partial y_i}\frac{\bx-\boy}{|\bx-\boy|^{m}},\\
&f(\bx,\bbeta)=\int_{\Sm}Z_k(\bbeta,\bov)f(\bx,\bov)dS(\bov).
\end{align*}
Hence, the second integral in \eqref{partial1} can be rewritten as
\begin{align*}
&\lim_{{\epsilon}\rightarrow 0}\int_{\Omega_{\epsilon}}\bigg(\frac{y_i-x_i}{|\bx-\boy|^m}D_{\bx}\bigg)f(\bx,\bbeta)+\bigg(\frac{|\bx-\boy|^{2-m}}{2-m}D_{\bx}\bigg)\sum_{s=1}^m\frac{\partial \eta_s}{\partial y_i}\frac{\partial}{\partial\eta_s}f(\bx,\bbeta)d\bx\\
=&\lim_{{\epsilon}\rightarrow 0}\int_{\Omega_{\epsilon}}\bigg(\partial_{y_i}\frac{|\bx-\boy|^{2-m}}{2-m}D_{\bx}\bigg)f(\bx,\bbeta)+\frac{\bx-\boy}{|\bx-\boy|^{m}}\frac{\partial}{\partial y_i}f(\bx,\bbeta)d\bx\\
=&\lim_{{\epsilon}\rightarrow 0}\int_{\Omega_{\epsilon}}\int_{\Sm}\frac{\partial}{\partial y_i}\bigg[\frac{\bx-\boy}{|\bx-\boy|^{m}}Z_k\bigg(\frac{(\bx-\boy)\bu(\bx-\boy)}{|\bx-\boy|^2},\bov\bigg)\bigg]f(\bx,\bov)dS(\bov)d\bx\\
=&c_{m,k}\lim_{{\epsilon}\rightarrow 0}\int_{\Omega_{\epsilon}}\int_{\Sm}\frac{\partial}{\partial y_i}E_k(\bx-\boy,\bu,\bov)f(\bx,\bov)dS(\bov)d\bx\\
=&c_{m,k}\int_{\Omega}\int_{\Sm}\frac{\partial}{\partial y_i}E_k(\bx-\boy,\bu,\bov)f(\bx,\bov)dS(\bov)d\bx.
\end{align*}
For the first integral in \eqref{partial1}, let $\bx=\boy+\epsilon\bt$, and with \eqref{eqn1}, we have
\begin{align*}
&\lim_{{\epsilon}\rightarrow 0}\int\limits_{\partial B(\boy,\epsilon)}\frac{x_i-y_i}{|\bx-\boy|^m}n(\bx)f(\bx,\bbeta)-\frac{|\bx-\boy|^{2-m}}{2-m}n(\bx)\sum_{s=1}^m\frac{\partial \eta_s}{\partial y_i}\frac{\partial f(\bx,\bbeta)}{\partial\eta_s}d\sigma(\bx)\\
=&\lim_{{\epsilon}\rightarrow 0}\int_{|\bt|=1}t_i\bt f(\boy+\epsilon\bt,\bt\bu\bt)\\
&\quad\quad-\frac{2\bt}{2-m}\bigg[u_i\langle\bt,D_{\bbeta}\rangle+\langle\bt,\bu\rangle\frac{\partial}{\partial\eta_i}+2t_i\langle\bt,\bu\rangle\langle\bt,D_{\bbeta}\rangle\bigg]f(\boy+\epsilon\bt,\bt\bu\bt)d\sigma(\bt)\\
=&\int_{|\bt|=1}t_i\bt f(\boy,\bt\bu\bt)\\
&\quad\quad-\frac{2\bt}{2-m}\bigg[u_i\langle\bt,D_{\bbeta}\rangle+\langle\bt,\bu\rangle\frac{\partial}{\partial\eta_i}+2t_i\langle\bt,\bu\rangle\langle\bt,D_{\bbeta}\rangle\bigg]f(\boy,\bt\bu\bt)d\sigma(\bt).
\end{align*}
Hence, we have 
\begin{align}\label{partial}
&\frac{\partial}{\partial y_i}T_kf(\boy,\bu)=\int_{\Omega}\int_{\Sm}\frac{\partial}{\partial y_i}E_k(\bx-\boy,\bu,\bov)f(\bx,\bov)dS(\bov)d\bx\nonumber\\
&+c_{m,k}^{-1}\int_{|\bt|=1}t_i\bt f(\boy,\bt\bu\bt)d\sigma(\bt)\nonumber\\
&-c_{m,k}^{-1}\int_{|\bt|=1}\frac{2\bt}{2-m}\bigg[u_i\langle\bt,D_{\bbeta}\rangle+\langle\bt,\bu\rangle\frac{\partial}{\partial\eta_i}+2t_i\langle\bt,\bu\rangle\langle\bt,D_{\bbeta}\rangle\bigg]f(\boy,\bt\bu\bt)d\sigma(\bt).
\end{align}
Now, we study the three integrals in \eqref{partial}, respectively. For convenience, we denote the operators
\begin{align*}
&F_1f(\boy,\bu)=\int_{\Omega}\int_{\Sm}\frac{\partial}{\partial y_i}E_k(\bx-\boy,\bu,\bov)f(\bx,\bov)dS(\bov)d\bx,\\
&F_2f(\boy,\bu)=c_{m,k}^{-1}\int_{|\bt|=1}t_i\bt f(\boy,\bt\bu\bt)d\sigma(\bt),\\
&F_3f(\boy,\bu)=-c_{m,k}^{-1}\int_{|\bt|=1}\frac{2\bt}{2-m}\bigg[u_i\langle\bt,D_{\bbeta}\rangle+\langle\bt,\bu\rangle\frac{\partial}{\partial\eta_i}\\
&\quad\quad\quad\quad\quad\quad\quad\quad\quad\quad\quad+2t_i\langle\bt,\bu\rangle\langle\bt,D_{\bbeta}\rangle\bigg]f(\boy,\bt\bu\bt)d\sigma(\bt).
\end{align*}
Hence, we have $\frac{\partial}{\partial y_i}T_k=F_1+F_2+F_3$, and we will show that $\frac{\partial}{\partial y_i}T_kf\in L^p(\Omega\times \Bm,\Mk)$ by proving that $F_jf\in L^p(\Omega\times \Bm,\Mk)$ for $j=1,2,3$, respectively.
\begin{enumerate}
\item Firstly, we consider
\begin{align}\label{p1}
&|F_1f(\boy,\bu)|\leq\int_{\Omega}\int_{\Sm}\bv\frac{\partial}{\partial y_i}E_k(\bx-\boy,\bu,\bov)f(\bx,\bov)\bv dS(\bov)d\bx\nonumber\\
=&\int_{\Omega}\int_{\Sm}\bv\frac{\partial}{\partial y_i}\bigg[\frac{\bx-\boy}{|\bx-\boy|^{m}}Z_k\bigg(\frac{(\bx-\boy)\bu(\bx-\boy)}{|\bx-\boy|^2},\bov\bigg)\bigg]f(\bx,\bov)\bv dS(\bov)d\bx\nonumber\\
=&\int_{\Omega}\int_{\Sm}\bv\bigg[\bigg(\frac{\partial}{\partial y_i}\frac{\bx-\boy}{|\bx-\boy|^{m}}\bigg)Z_k\bigg(\frac{(\bx-\boy)\bu(\bx-\boy)}{|\bx-\boy|^2},\bov\bigg)\nonumber\\
&+\frac{\bx-\boy}{|\bx-\boy|^{m}}\bigg(\frac{\partial}{\partial y_i}Z_k\bigg(\frac{(\bx-\boy)\bu(\bx-\boy)}{|\bx-\boy|^2},\bov\bigg)\bigg]f(\bx,\bov)\bv dS(\bov)d\bx\nonumber\\
=&\int_{\Omega}\int_{\Sm}\bv\bigg[\bigg(\frac{\partial}{\partial y_i}\frac{\bx-\boy}{|\bx-\boy|^{m}}\bigg)Z_k\bigg(\frac{(\bx-\boy)\bu(\bx-\boy)}{|\bx-\boy|^2},\bov\bigg)\nonumber\\
&+\frac{\bx-\boy}{|\bx-\boy|^{m}}\sum_{s=1}^m\frac{\partial\eta_s}{\partial y_i}\frac{\partial}{\partial \eta_s}Z_k(\bbeta,\bov)\bigg]f(\bx,\bov)\bv dS(\bov)d\bx.
\end{align}
Notice that $Z_k(\bbeta,\bov)$ and $\frac{\partial}{\partial{\eta_s}}Z_k(\bbeta,\bov)$ are all bounded for $\bov\in\Sm,\bu\in \Bm$. Further, with the three equations in \eqref{eqn1}, one can see that the homogeneity of $|\bx-\boy|$ is $-m$. Hence, we obtain the equation \eqref{p1}
\begin{align}\label{p2}
\leq C\int_{\Omega}\frac{1}{|\bx-\boy|^{m}}\bigg[\int_{\Sm}|f(\bx,\bov)|dS(\bov)\bigg]d\bx.
\end{align}
Since $f\in L^p(\Omega\times \Bm,\Mk)$, we can easily see that
\begin{align*}
\int_{\Omega}\bv\int_{\Sm}|f(\bx,\bov)|dS(\bov)\bv^pd\bx&\leq \int_{\Omega}\int_{\Sm}|f(\bx,\bov)|^pdS(\bov)pd\bx\\
&= ||f||^p_{L^p(\Omega\times \Bm,\Mk)}<\infty,
\end{align*}
in other words,
\begin{align*}
\Phi(\bx):=\int_{\Sm}|f(\bx,\bov)|dS(\bov)\in L^p(\Omega,\Clm).
\end{align*}
Therefore, with Theorem of Calderon and Zygmund \cite[Theorem 3.1, XI]{MP} and \eqref{p2}, we obtain that
\begin{align*}
&||F_1f(\boy,\bu)||^p_{L^p(\Omega\times \Bm,\Mk)}\leq C\int_{\Omega}\bv\int_{\Omega}\frac{1}{|\bx-\boy|^m}\Phi(\bx)d\bx\bv^pd\boy\\
\leq& C\int_{\Omega}|\Phi(\bx)|^pd\bx=C\int_{\Omega}\bigg(\int_{\Sm}|f(\bx,\bov)|dS(\bov)\bigg)^pd\bx\\
\leq& C\int_{\Omega}\int_{\Sm}|f(\bx,\bov)|^pdS(\bov)d\bx= C||f||_{L^p(\Omega\times \Bm,\Mk)}<\infty.
\end{align*}
\item Now, let $\bbeta=\bt\bu\bt$, we have
\begin{align*}
||F_2f||^p_{L^p(\Omega\times \Bm,\Mk)}=&\int_{\Omega}\int_{\Sm}\bv c_{m,k}^{-1}\int_{|\bt|=1}t_i\bt f(\boy,\bt\bu\bt)d\sigma(\bt)\bv^p dS(\bu)d\boy\\
=& c_{m,k}^{-p}\int_{\Omega}\int_{\Sm}\bv \int_{|\bt|=1}t_i\bt f(\boy,\bbeta)d\sigma(\bt)\bv^p dS(\bbeta)d\boy\\
=&c_{m,k}^{-p}\int_{\Omega}\int_{\Sm}|f(\boy,\bbeta)|^p\bv \int_{|\bt|=1}t_i\bt d\sigma(\bt)\bv^p dS(\bbeta)d\boy\\
\leq&C\int_{\Omega}\int_{\Sm}|f(\boy,\bbeta)|^pdS(\bbeta) d\boy\\
=&C||f||_{L^p(\Omega\times \Bm,\Mk)}<\infty.
\end{align*}
\item It is easy to observe that
\begin{align}\label{eqn3}
&||F_3f||^p_{L^p(\Omega\times \Bm,\Mk)}\nonumber\\
\leq &C_1\bv\bv\int_{|\bt|=1}u_i\bt\langle\bt,D_{\bbeta}\rangle f(\boy,\bt\bu\bt)d\sigma(\bt)\bv\bv^p_{L^p(\Omega\times \Bm,\Mk)}\nonumber\\
&+C_2\bv\bv\int_{|\bt|=1}\bt\langle\bt,\bu\rangle\frac{\partial}{\partial\eta_i}f(\boy,\bt\bu\bt)d\sigma(\bt)\bv\bv^p_{L^p(\Omega\times \Bm,\Mk)}\nonumber\\
&+C_3\bv\bv\int_{|\bt|=1} t_i\bt\langle\bt,\bu\rangle\langle\bt,D_{\bbeta}\rangle f(\boy,\bt\bu\bt)d\sigma(\bt)\bv\bv^p_{L^p(\Omega\times \Bm,\Mk)}.
\end{align}
Let $\bbeta=\bt\bu\bt$, then we have $\eta_i=u_i-2\langle \bt,\bbeta\rangle t_i$.
Hence, the first term in \eqref{eqn3} becomes
\begin{align}\label{imp}
&\bv\bv\int_{|\bt|=1}u_i\bt\langle\bt,D_{\bbeta}\rangle f(\boy,\bt\bu\bt)d\sigma(\bt)\bv\bv^p_{L^p(\Omega\times \Bm,\Mk)}\nonumber\\
=&\int_{\Omega}\int_{\Sm}\bv\int_{|\bt|=1} u_i\bt\langle\bt,D_{\bbeta}\rangle f(\boy,\bt\bu\bt)d\sigma(\bt)\bv^pdS(\bu)d\boy\nonumber\\
=&\int_{\Omega}\int_{\Sm}\bv\int_{|\bt|=1}(\eta_i+2\langle \bt,\bbeta\rangle t_i) \bt\langle\bt,D_{\bbeta}\rangle f(\boy,\bbeta)d\sigma(\bt)\bv^pdS(\bbeta)d\boy\nonumber\\
\leq&C\int_{\Omega}\int_{\Sm}\bv\sum_{j=1}^m\frac{\partial f(\boy,\bbeta)}{\partial\eta_j}\bv^pdS(\bbeta)d\boy\nonumber\\
\leq& C\sum_{j=1}^m\int_{\Omega}\int_{\Sm}\bv\frac{\partial f(\boy,\bbeta)}{\partial\eta_j}\bv^pdS(\bbeta)d\boy,
\end{align}
where the first inequality above comes from the fact that $\bt,\bu\in\Sm$, which is obviously bounded. Further, we notice that the function $f(\boy,\bbeta)$ is a $k$-homogeneous harmonic polynomial with respect to $\bbeta$, therefore, equation \eqref{imp} becomes
\begin{align*}
\leq& c_m\int_{\Omega}\int_{B(0,\frac{1}{2})}|\nabla_{\bbeta} f(\boy,\bbeta)|^pd\bbeta d\boy\\
\leq&c_m\int_{\Omega}\int_{\Bm}|f(\boy,\bbeta)|^pd\bbeta d\boy=c_m'\int_{\Omega}\int_{\Sm}|f(\boy,\bbeta)|^pdS(\bbeta)d\boy\\
=&c_m'||f||_{L^p(\Omega\times\Bm,\Mk)},
\end{align*}
where $c_m,c_m'$ are positive constants only depending on $m$, and the second inequality above comes from the interior gradient estimate for harmonic functions. 
\par
We can also have similar results for the other two terms in \eqref{eqn3}, in other words,
\begin{align*}
&\bv\bv\int_{|\bt|=1}\bt\langle\bt,\bu\rangle\frac{\partial}{\partial\eta_i}f(\boy,\bt\bu\bt)d\sigma(\bt)\bv\bv^p_{L^p(\Omega\times \Bm,\Mk)}\leq C||f||_{L^p(\Omega\times\Bm,\Mk)}\\
&\bv\bv\int_{|\bt|=1} t_i\bt\langle\bt,\bu\rangle\langle\bt,D_{\bbeta}\rangle f(\boy,\bt\bu\bt)d\sigma(\bt)\bv\bv^p_{L^p(\Omega\times \Bm,\Mk)}\leq C||f||_{L^p(\Omega\times\Bm,\Mk)}.
\end{align*}
Therefore, we have 
\begin{align*}
||F_3f||_{L^p(\Omega\times \Bm,\Mk)}\leq C||f||_{L^p(\Omega\times\Bm,\Mk)},
\end{align*}
which leads to the fact that $\frac{\partial}{\partial y_i}T_k=F_1+F_2+F_3\in L^p(\Omega\times \Bm,\Mk)$. Combining with Theorem \ref{Tkb}, we have $T_kf\in W^{1}_p(\Omega\times\Bm,\Mk)$ for all $f\in L^p(\Omega\times\Bm,\Mk)$, which completes the proof.
\end{enumerate}
\end{proof}
%%%%%%%%%%%%%%%%%%%%%%%%%%%%%%%
%%%%%%%%%%%%%%%%%%%%%%%%%%%%%%%
\section{A Hodge decomposition in higher spin theory}

%%%%%%%%%%%

We also have a point-wise estimate by an $L^2$ norm.
\begin{proposition}\label{pestimate}
Let $f\in L^2(\Omega\times \Bm,\Mk)$ and $D\subset\Omega$ is compact. Then, we have
\begin{align*}
\sup_{\boy\in D,\bu\in\Sm}|f(\boy,\bu)|\leq C_{k,D}||f||_{L^2(\Omega\times \Bm,\Mk)}.
\end{align*}
\end{proposition}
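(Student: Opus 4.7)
The plan is to reduce the pointwise bound to the $L^1$ Cauchy estimate proved earlier in the paper, followed by Cauchy--Schwarz. Although the statement only asserts $f\in L^2(\Omega\times\Bm,\Mk)$, a pointwise supremum requires regularity, so I will interpret the claim in its natural context: $f$ is a fermionic $L^2$-function (i.e.\ $R_kf=0$), which is the setting for the forthcoming Bergman space. Under this assumption, Cauchy's estimate with $L^1$ norm (proved in the previous section), applied with $\balpha=\bbbeta=0$, gives
\begin{align*}
|f(\boy,\bu)|\leq \frac{c_{m,k}}{r_1^m r_2^m}\,\|f\|_{L^1(B(\boy,r_1)\times B(0,1-r_2/4),\Mk)},
\end{align*}
whenever $\overline{B(\boy,r_1)}\subset\Omega$ and $\bu\in\Bm$ has distance $r_2$ from $\Sm$.

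The first subtlety is that for $\bu\in\Sm$ we have $r_2=0$, which the estimate does not directly handle. I would circumvent this by invoking the $k$-homogeneity of $f$ in $\bu$: for $\bu\in\Sm$, write $f(\boy,\bu)=2^k f(\boy,\bu/2)$, and apply the estimate at the interior point $\bu/2\in B(0,1/2)$, so that $r_2=1/2$ is an absolute constant. The second ingredient is a uniform choice of $r_1$. Since $D$ is compact and $D\subset\Omega$, the distance $\delta:=\mathrm{dist}(D,\partial\Omega)$ is strictly positive, and I can take $r_1=\delta/2$, which works uniformly for all $\boy\in D$.

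Next, I would pass from the $L^1$ norm to the $L^2$ norm by Cauchy--Schwarz on the bounded set $B(\boy,r_1)\times B(0,7/8)$, picking up a factor equal to the square root of its Lebesgue volume. Here one must reconcile the two measures that appear in the excerpt: the $L^1$ Cauchy estimate uses the Lebesgue measure $d\bu$ on the ball, while the paper's $L^p$ norm on $\Mk$-valued functions uses $dS(\bu)$, effectively surface measure on $\Sm$. The Remark after Theorem~3.1 observes that the two are equivalent for $k$-homogeneous integrands (integration in polar coordinates contributes only a constant depending on $m$ and $k$), so the two $L^2$ norms agree up to a constant. After enlarging the integration domain from $B(\boy,r_1)\times B(0,7/8)$ to $\Omega\times\Bm$, this yields
\begin{align*}
|f(\boy,\bu)|\leq C_{k,D}\,\|f\|_{L^2(\Omega\times\Bm,\Mk)},
\end{align*}
with $C_{k,D}$ depending only on $k$, $m$, and $\delta$. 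Taking the supremum over $\boy\in D$ and $\bu\in\Sm$ finishes the proof.

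There is no real obstacle here; the result is essentially a packaging of the $L^1$ Cauchy estimate together with Cauchy--Schwarz. The only technical points worth flagging are (i) the need to use homogeneity to move $\bu$ off the boundary sphere $\Sm$ so that the Cauchy estimate applies, and (ii) the passage between $d\bu$ and $dS(\bu)$ measures in the $L^p$ conventions, both of which are routine.
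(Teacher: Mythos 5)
Your proposal is correct, and you correctly spot that the hypothesis $R_kf=0$ is missing from the statement and must be supplied (the paper's own proof also tacitly assumes it, since it invokes the mean value property). However, your route is genuinely different from the paper's. The paper does not go through the Cauchy estimates at all. Instead it starts from $\|f\|_{L^2(\Omega\times\Bm)}^2$, restricts the $\bx$-integral to a ball $B(\boy,r)$, performs the reflection-type change of variables $\bov\mapsto (\bx-\boy)\bu\widetilde{(\bx-\boy)}/|\bx-\boy|^2$ on the sphere (which preserves surface measure), and observes that the rotated integrand is still harmonic in $\bu$; Fischer decomposition and the $L^2(\Sm)$-orthogonality of $\Mk$ and $\bu\Mkk$ then let it discard the $\bu\Mkk$-component, after which Cauchy--Schwarz over the ball plus the mean value property of Theorem~\ref{MVP1} recover $|f(\boy,\bu)|^2$ directly (the final step also quietly uses the equivalence of the sup norm and $L^2(\Sm)$ norm on the finite-dimensional space $\Mk$). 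Your argument, by contrast, packages all of that inside the already-proved $L^1$ Cauchy estimate (whose own proof relies on the mean value property), and then only needs Cauchy--Schwarz, $k$-homogeneity to pull $\bu$ into $B(0,1/2)$, compactness of $D$ to fix $r_1$, and the $d\bu$-versus-$dS(\bu)$ normalization. What your approach buys is economy: it is a two-line reduction to a known lemma with no new identity manipulation. What the paper's approach buys is that it avoids the $L^1$ Cauchy estimate entirely (which depends on a further reflection and the interior gradient estimate for harmonic functions), so it is more self-contained at the level of ingredients even though it is more computational on the page. Both arguments are correct modulo the shared implicit regularity point that $L^2$ null-solutions of $R_k$ are smooth enough for the Cauchy/mean-value machinery, which neither you nor the paper addresses explicitly.
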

\begin{proof}
Let $\boy\in D$ be an arbitrary point and $B(\boy,r)\subset D$ for a sufficiently small $r>0$. On one hand, we have
\begin{align}\label{eqn1}
&||f||^2_{L^2(\Omega\times \Bm)}=\int_{\Omega}\int_{\mathbb{S}^{m-1}}|f(\bx,\bov)|^2dS(\bov)d\bx\nonumber\\
\geq&\int_{B(\boy,r)}\int_{\Sm}|f(\bx,\bov)|^2dS(\bov)d\bx\nonumber\\
=&\int_{B(\boy,r)}\int_{\Sm}|f(\bx,\frac{(\bx-\boy)\bu\widetilde{(\bx-\boy)}}{|\bx-\boy|^2})|^2dS(\bu)d\bx\nonumber\\
=&\int_{B(\boy,r)}\int_{\Sm}\bigg[\overline{f(\bx,\frac{(\bx-\boy)\bu\widetilde{(\bx-\boy)}}{|\bx-\boy|^2})}\cdot f(\bx,\frac{(\bx-\boy)\bu\widetilde{(\bx-\boy)}}{|\bx-\boy|^2})\bigg]_0dS(\bu)d\bx.
\end{align}
Note that $f(\bx,\bu)$ is monogenic with respect to $\bu$, which implies it is also harmonic in $\bu$. Since harmonicity is invariant under rotation, $f\bigg(\bx,\frac{(\bx-\boy)\bu\widetilde{(\bx-\boy)}}{|\bx-\boy|^2}\bigg)$ is also harmonic in $\bu$. Therefore, with the Fischer decomposition, we have
\begin{align*}
f\bigg(\bx,\frac{(\bx-\boy)\bu\widetilde{(\bx-\boy)}}{|\bx-\boy|^2}\bigg)=f_1(\bx,\bu)+uf_2(\bx,\bu)
\end{align*}
where
\begin{align*}
&f_1(\bx,\bu)=P_k^+f(\bx,\frac{(\bx-\boy)\bu\widetilde{(\bx-\boy)}}{|\bx-\boy|^2})\in \Mk(\bu),\\
 &\bu f_2(\bx,\bu)=P_k^-f(\bx,\frac{(\bx-\boy)\bu\widetilde{(\bx-\boy)}}{|\bx-\boy|^2})\in\bu\Mkk(\bu).
\end{align*}
The Cauchy's theorem for monogenic functions \cite[Corollary 1]{DJR} implies that
\begin{align*}
\int_{\Sm}\overline{f_1(\bx,\bu)}\bu f_2(\bx,\bu)dS(\bu)=0.
\end{align*}
Therefore, \eqref{eqn1} is equal to
\begin{align*}
=&\int_{B(\boy,r)}\int_{\Sm}\big[\overline{(f_1(\bx,\bu)+uf_2(\bx,\bu))}(f_1(\bx,\bu)+uf_2(\bx,\bu))\big]_0dS(\bu)d\bx\\
=&\int_{B(\boy,r)}\int_{\Sm}\big[\overline{(f_1(\bx,\bu)}f_1(\bx,\bu)\big]_0dS(\bu)d\bx\\
&+\int_{B(\boy,r)}\int_{\Sm}\big[\overline{(f_2(\bx,\bu)}f_2(\bx,\bu)\big]_0dS(\bu)d\bx\\
\geq&\int_{B(\boy,r)}\int_{\Sm}\big[\overline{(f_1(\bx,\bu)}f_1(\bx,\bu)\big]_0dS(\bu)d\bx\\
=&\int_{B(\boy,r)}\int_{\Sm}\bv P^+_{k,u}f\bigg(\bx,\frac{(\bx-\boy)\bu\widetilde{(\bx-\boy)}}{|\bx-\boy|^2}\bigg)\bv^2dS(\bu)d\bx\\
\geq&C_r\int_{\Sm}\bv \int_{B(\boy,r)}P^+_{k,u}f\bigg(\bx,\frac{(\bx-\boy)\bu\widetilde{(\bx-\boy)}}{|\bx-\boy|^2}\bigg)d\bx\bv^2 dS(\bu)\\
=&C_{k,r}\int_{\Sm}|f(\boy,\bu)|^2dS(\bu)\geq C_{k,r}\sup_{\bu\in\Sm}|f(\boy,\bu)|^2,
\end{align*}
where $C_{k,r}$ stands for a positive constant depending on $k$ and $r$ and the last equation comes from the mean-value property given in Corollary \ref{MVP1}. Since $\boy\in D$ is arbitrary, we have that
\begin{align*}
\sup_{\boy\in D,\bu\in\Sm}|f(\boy,\bu)|\leq C_{k,D}||f||_{L^2(\Omega\times \Bm)}.
\end{align*}
where $D\subset \Omega$ is an arbitrary compact set.
\end{proof}
Now, we claim that 
\begin{theorem}
The set $\ker R_k\cap L^2(\Omega\times \Bm,\Mk)$ is a closed subspace in $L^2(\Omega\times \Bm,\Mk)$.
\end{theorem}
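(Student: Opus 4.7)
The strategy is the standard one: extract uniform-on-compacts convergence from $L^2$ convergence via the pointwise estimate of Proposition \ref{pestimate}, then promote this to uniform convergence of derivatives via Cauchy's estimates, and finally pass the equation $R_kf_n=0$ to the limit.

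First I would take a Cauchy sequence $\{f_n\}\subset\ker R_k\cap L^2(\Omega\times\Bm,\Mk)$ with $L^2$-limit $f$. Because $L^2(\Omega\times\Bm,\Mk)$ is already a closed subspace of $L^2(\Omega\times\Bm,\Clm)$ by Theorem 3.1, the limit $f$ automatically takes values in $\Mk$ in the $\bu$-variable, so the only real content is to verify $R_kf=0$. Applying Proposition \ref{pestimate} to the differences $f_n-f_m$ gives, for every compact $D\subset\Omega$,
\begin{equation*}
\sup_{\boy\in D,\bu\in\Sm}|f_n(\boy,\bu)-f_m(\boy,\bu)|\leq C_{k,D}\|f_n-f_m\|_{L^2(\Omega\times\Bm,\Mk)}\longrightarrow 0.
\end{equation*}
Hence $\{f_n\}$ is uniformly Cauchy on $D\times\Sm$, and by $k$-homogeneity in $\bu$ also on $D\times\Bm$. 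The uniform limit, call it $\tilde f$, is continuous on $\Omega\times\Bm$ and agrees with $f$ almost everywhere, so I can replace $f$ by this canonical continuous representative.

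Next I would upgrade uniform convergence to convergence of derivatives. For any $\boy\in\Omega$, $\bu\in\Bm$, choose $r_1,r_2>0$ so that $\overline{B(\boy,r_1)}\subset\Omega$ and $\overline{B(\bu,r_2)}\subset\Bm$; applying Theorem \ref{CE} (Cauchy's estimates with $L^\infty$ norm) to $f_n-f_m$ yields, for every pair of multi-indices $\balpha,\bbbeta$,
\begin{equation*}
|D_{\boy}^{\balpha}D_{\bu}^{\bbbeta}(f_n-f_m)(\boy,\bu)|\leq\frac{c_{\balpha,\bbbeta,m,k}}{r_1^{|\balpha|}r_2^{|\bbbeta|}}\|f_n-f_m\|_{L^\infty(B(\boy,r_1)\times B(\bu,r_2),\Mk)}.
\end{equation*}
The right-hand side is controlled by $\|f_n-f_m\|_{L^2}$ via Proposition \ref{pestimate} applied to the compact set $\overline{B(\boy,r_1)}$, so every derivative of $f_n$ converges locally uniformly. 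In particular $f$ is $C^\infty$ on $\Omega\times\Bm$, and $R_kf_n\to R_kf$ uniformly on compact subsets. Since $R_kf_n=0$ for every $n$, this forces $R_kf=0$, and therefore $f\in\ker R_k\cap L^2(\Omega\times\Bm,\Mk)$, proving closedness.

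The potentially delicate step is the interplay between the $L^2$ equivalence class and the pointwise (smooth) representative: the proposition gives a pointwise bound only after a representative is chosen, so I need to know that the uniform-Cauchy property across all $n$ produces a single continuous $\tilde f$ that agrees almost everywhere with the $L^2$-limit. This is automatic once the uniform limit exists, but it is the one conceptual point that must be stated explicitly; everything else is a direct application of the two a priori estimates already established.
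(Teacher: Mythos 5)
Your proof is correct and follows essentially the same strategy as the paper: use Proposition \ref{pestimate} to convert $L^2$ convergence into local uniform convergence, then pass $R_kf_n=0$ to the limit. The one genuine (and welcome) difference is that you explicitly invoke Cauchy's estimates (Theorem \ref{CE}) applied to $f_n-f_m$ to justify local uniform convergence of all derivatives, whereas the paper simply asserts that uniform convergence permits interchanging limit and differentiation; your version makes that step rigorous, since uniform convergence of a sequence alone does not control its derivatives --- the Weierstrass-type conclusion rests precisely on the a priori estimate you cite.
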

\begin{proof}
Let $\{f_n\}$ be a Cauchy sequence in $\ker R_k\cap L^2(\Omega\times \Bm,\Mk)$, and it converges to a function $f\in L^2(\Omega\times \Bm,\Mk)$. Proposition \ref{pestimate} tells us that there exists a function 
$$g: \Omega\times \Bm\longrightarrow\Clm$$
defined by $g(\boy,\bu)=\underset{n\rightarrow \infty}{\lim}f_n(\boy,\bu)$. Further, $\{f_n\}$ converges uniformly to $g$ on every $D\times\Sm$, where $D\subset \Omega$ is compact. This implies that one can interchange limit and differentiation on the sequence $\{f_n\}$, which gives rise to 
\begin{align*}
&D_{\bu}g(\boy,\bu)=\lim_{n\rightarrow\infty}D_{\bu}f_n(\boy,\bu)=0,\\
& R_kg(\boy,\bu)=\lim_{n\rightarrow\infty}R_kf_n(\boy,\bu)=0,
\end{align*}
for $(\boy,\bu)\in\Omega\times \Bm$. In other words, $g\in \ker R_k\cap \Mk(\bu)$. Now, for any compact set $D\subset\Omega$, we have
\begin{align*}
0\leq& \int_{D}\int_{\Sm}|f(\boy,\bu)-g(\boy,\bu)|^2dS(\bu)d\boy\\
\leq& \int_{D}\int_{\Sm}|f(\boy,\bu)-f_n(\boy,\bu)|^2dS(\bu)d\boy\\
&+\int_{D}\int_{\Sm}|f_n(\boy,\bu)-g(\boy,\bu)|^2dS(\bu)d\boy\\
=&||f-f_n||_{L^2(\Omega\times\Bm)}+\int_{D}\int_{\Sm}|f_n(\boy,\bu)-g(\boy,\bu)|^2dS(\bu)d\boy,
\end{align*}
which tends to zero when $n\rightarrow\infty$ above. Therefore, we have $f=g\in R_k\cap L^2(\Omega\times \Bm,\Mk)$, which completes the proof.
\end{proof}
Now, we denote 
\begin{align*}
&F_{\partial\Omega}f(\by,\bu):=\int_{\partial\Omega}\int_{\Sm}E_k(\bx-\boy,\bu,\bov)n(\bx)f(\bx,\bov)dS(\bov)d\bx,\ \boy\in\Omega,\\
&S_{\partial\Omega}f(\bo{\omega},\bu):=p.v.\int_{\partial\Omega}\int_{\Sm}E_k(\bx-\bo{\omega},\bu,\bov)n(\bx)f(\bx,\bov)dS(\bov)d\bx,\ \bo{\omega}\in\partial\Omega.
\end{align*}
Then, the Plemelj formula for Rarita-Schwinger type operators are stated as follows. It is worth pointing out that the Plemelj formula given in \cite{Li} is on Lipschitz domains. For our own convenience, here we state it slightly differently.
\begin{theorem}\cite[Theorem 1]{Li}\label{Plemelj} 
Let $\Omega\subset\Rm$ be a domain with smooth boundary $\partial\Omega$. $\boy(t)$ is a smooth path in $\Rm$ and it has non-tangential limit $\bo{\omega}\in\partial\Omega$ as $t\rightarrow 0$. Then, for each H\"older continuous function $f:\Omega\times\Bm\longrightarrow\Clm$ defined on $\Omega$, we have 
	\begin{align*}
		\lim_{t\to0}F_{\partial\Omega}f(\boy(t),\bu)
		=\begin{cases}
			\dfrac{f(\bo{\omega},\bu)}{2}+S_{\partial\Omega}f(\bo{\omega},\bu),(\boy(t),\bu)\in\Omega\times\Bm;\\
			-\dfrac{f(\bo{\omega},\bu)}{2}+S_{\partial\Omega}f(\bo{\omega},\bu),(\boy(t),\bu)\in\big(\mathbb{R}^m\backslash\overline{\Omega}\big)\times\Bm.
\end{cases}
	\end{align*}
	\end{theorem}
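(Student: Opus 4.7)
The plan is to adapt the classical Plemelj--Sokhotski jump argument for monogenic Cauchy integrals to the higher-spin kernel $E_k$, relying on the reproducing property of $Z_k$ together with the Cauchy integral formula already established for $R_k$. The key device is the standard ``split the density'' decomposition
\[
f(\bx,\bov)=\bigl[f(\bx,\bov)-f(\bo{\omega},\bov)\bigr]+f(\bo{\omega},\bov),
\]
which writes $F_{\partial\Omega}f(\boy,\bu)=I_1(\boy,\bu)+I_2(\boy,\bu)$, with $I_1$ carrying the H\"older-vanishing difference as density and $I_2$ the frozen value $f(\bo{\omega},\cdot)$. The jump lies entirely in $I_2$, while $I_1$ extends continuously across $\partial\Omega$.

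For $I_1$, the bound $|E_k(\bx-\boy,\bu,\bov)|\le C|\bx-\boy|^{1-m}$ (valid because $Z_k$ is a polynomial, hence bounded on $\overline{\Bm}\times\overline{\Bm}$) combined with the H\"older estimate $|f(\bx,\bov)-f(\bo{\omega},\bov)|\le C|\bx-\bo{\omega}|^{\alpha}$ gives an integrand dominated by $|\bx-\boy|^{1-m}|\bx-\bo{\omega}|^{\alpha}$. Standard Muschelischvili-type estimates for weakly singular Cauchy integrals on smooth surfaces then imply that $I_1(\boy(t),\bu)\to\mathrm{p.v.}\,I_1(\bo{\omega},\bu)$ along any non-tangential path, with the same limit from both sides of $\partial\Omega$.

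For $I_2$, observe that the frozen density $h(\bx,\bov):=f(\bo{\omega},\bov)$ is independent of $\bx$ and satisfies $h(\bx,\cdot)\in\Mk$, so $R_k h\equiv 0$. The Cauchy integral formula for $R_k$ (Theorem~7 of \cite{DJR}) then yields $I_2(\boy,\bu)=h(\boy,\bu)=f(\bo{\omega},\bu)$ for $\boy\in\Omega$; a large-ball argument combining the Cauchy integral formula on $\Omega$ and on $B_R\setminus\overline{\Omega}$ shows $I_2(\boy,\bu)=0$ for $\boy\in\mathbb{R}^m\setminus\overline{\Omega}$. Hence the one-sided non-tangential limits of $I_2$ are $f(\bo{\omega},\bu)$ from inside and $0$ from outside. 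To identify the principal value at $\bo{\omega}$, apply the Cauchy--Pompeiu formula on the excised domain $\Omega\setminus\overline{B(\bo{\omega},\epsilon)}$ and let $\epsilon\to 0$; the contribution from the hemispherical cap $\partial B(\bo{\omega},\epsilon)\cap\Omega$ collapses, via the mean-value calculation of Theorem~\ref{MVP1} restricted to a half-ball, to $\tfrac{1}{2}f(\bo{\omega},\bu)$. Thus $S_{\partial\Omega}h(\bo{\omega},\bu)=\tfrac{1}{2}f(\bo{\omega},\bu)$, and combining with the limits of $I_2$ delivers the $\pm\tfrac{1}{2}f(\bo{\omega},\bu)$ jump.

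The main technical obstacle is the hemispherical-cap computation producing the factor $\tfrac{1}{2}$: the angular factor $Z_k\bigl(\tfrac{(\bx-\bo{\omega})\bu(\bx-\bo{\omega})}{|\bx-\bo{\omega}|^2},\bov\bigr)$ is bounded but not continuous at $\bx=\bo{\omega}$, so the direction of approach matters. One parameterizes $\bx=\bo{\omega}+\epsilon\bt$ with $\bt$ on the unit half-sphere determined by the tangent plane to $\partial\Omega$ at $\bo{\omega}$, and invokes the identity $\tilde{\bt}P_{k,\bov}\bt=P_{k,\bu}$ with $\bov=\bt\bu\tilde{\bt}$ (as used in the proof of Theorem~\ref{MVP1}) to reduce the hemispherical integration to exactly half of the full mean-value integral. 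This is the analytic step that converts the geometric fact ``hemisphere has half the measure of the sphere'' into the $\tfrac{1}{2}$-jump of the Plemelj formula.
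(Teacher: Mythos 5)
This is a cited result: the paper attributes Theorem~\ref{Plemelj} to \cite[Theorem 1]{Li} and gives no proof of its own, only a remark that \cite{Li} proves it on Lipschitz domains and that the statement here is a cosmetic restatement for smooth boundaries. So there is no in-paper proof to compare your attempt against; I can only assess the proposal on its merits, and note that the density-splitting route you take is the standard Plemelj--Sokhotski argument that \cite{Li} almost certainly also uses.

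On the merits, your outline is sound and correctly adapted to the higher-spin kernel. The split $f=[f-f(\bo{\omega},\cdot)]+f(\bo{\omega},\cdot)$ is the right move: the Hölder-difference part $I_1$ is a weakly singular surface integral (since $|E_k(\bx-\boy,\bu,\bov)|\le C|\bx-\boy|^{1-m}$ with $Z_k$ uniformly bounded on $\overline{\Bm}\times\overline{\Bm}$), continuous up to $\partial\Omega$ with equal one-sided non-tangential limits; the frozen part $I_2$ carries the jump, and the key observation that $h(\bx,\bov)=f(\bo{\omega},\bov)$ is $\bx$-constant, hence $R_k h\equiv 0$, is exactly what allows the Cauchy integral formula and Cauchy's theorem for $R_k$ to evaluate $I_2$ as $f(\bo{\omega},\bu)$ inside and $0$ outside. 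Your algebra then recombines correctly.

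One point you gloss over, and it is the crux of the $\tfrac12$: the hemispherical cap integral is \emph{not} half the full-sphere mean-value integral merely because the hemisphere has half the surface measure, since the integrand $\bt\mapsto P_{k,\bu}f(\bo{\omega},\bt\bu\tilde{\bt})$ genuinely depends on $\bt$. What makes the reduction work is that this integrand is \emph{even} in $\bt$: replacing $\bt$ by $-\bt$ fixes $\bt\bu\tilde{\bt}$, so the integral over either hemisphere (inner or outer) equals exactly half the full-sphere integral $\omega_{m-1}c_k\,f(\bo{\omega},\bu)$. You should state this antipodal symmetry explicitly; the identity $\tilde{\bt}P_{k,\bov}\bt=P_{k,\bu}$ by itself does not produce the factor $\tfrac12$, and without the evenness argument the step from the mean-value formula of Theorem~\ref{MVP1} to the half-ball claim is not justified. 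With that symmetry made explicit, the proposal is a correct and complete sketch.
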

	With Theorem \ref{Plemelj} above, we can have a result on fermionic continuation as follows.
	\begin{corollary}\label{extension}
	Let $\Omega\in\Rm$ be a bounded domain with smooth boundary $\partial\Omega$. On one hand, a function $f$ represents the boundary values of a fermionic function $S_{\partial\Omega}f$ defined in $\Omega\times\Bm$ if and only if
\begin{align*}	
S_{\partial\Omega}f(\boy,\bu)=\frac{f(\boy,\bu)}{2}\ \text{for all}\ \boy\in\partial\Omega, \bu\in\Bm.
\end{align*}
 On the other hand, a function $f$ represents the boundary values of a fermionic function $S_{\partial\Omega}f$ defined in $\Rm\times\Bm\backslash{\Omega\times\Bm}$ if and only if \begin{align*}	
S_{\partial\Omega}f(\boy,\bu)=-\frac{f(\boy,\bu)}{2}\ \text{for all}\ \boy\in\partial\Omega, \bu\in\Bm.
\end{align*}
	\end{corollary}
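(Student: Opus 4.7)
The plan is to derive both parts of the corollary as direct consequences of the Plemelj formula (Theorem \ref{Plemelj}) combined with the Cauchy integral representation for fermionic functions (Theorem 8 of \cite{DJR}), which says that any fermionic function on a domain is reproduced from its boundary values by $F_{\partial\Omega}$. The argument is essentially the same on each side of $\partial\Omega$, with bookkeeping of the sign of the normal in the exterior.

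For the interior statement, I would first prove $(\Leftarrow)$. Assume $S_{\partial\Omega}f(\boy,\bu)=\frac{1}{2}f(\boy,\bu)$ on $\partial\Omega\times\Bm$. Define $F(\boy,\bu):=F_{\partial\Omega}f(\boy,\bu)$ for $\boy\in\Omega$. Since $E_k(\bx-\boy,\bu,\bov)$ is fermionic in $\boy$ (as the fundamental solution of $R_k$) and the integration is over variables other than $\boy$, differentiation under the integral sign gives $R_k F=0$, so $F$ is fermionic on $\Omega$. Taking the non-tangential limit $\boy(t)\to\bo{\omega}\in\partial\Omega$ from inside and applying Theorem \ref{Plemelj} gives $\lim_{t\to 0}F(\boy(t),\bu)=\frac{1}{2}f(\bo{\omega},\bu)+S_{\partial\Omega}f(\bo{\omega},\bu)=f(\bo{\omega},\bu)$, so $f$ is the boundary value of $F$. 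Conversely, if $f$ is the boundary value of a fermionic function $F$ on $\Omega$, then the Cauchy integral formula of \cite{DJR} gives $F(\boy,\bu)=F_{\partial\Omega}f(\boy,\bu)$ for $\boy\in\Omega$. Letting $\boy(t)\to\bo{\omega}\in\partial\Omega$ and using Plemelj again yields $f(\bo{\omega},\bu)=\frac{1}{2}f(\bo{\omega},\bu)+S_{\partial\Omega}f(\bo{\omega},\bu)$, which rearranges to $S_{\partial\Omega}f(\bo{\omega},\bu)=\frac{1}{2}f(\bo{\omega},\bu)$.

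For the exterior statement, the same logic applies after accounting for the orientation of the normal. If $f$ is the boundary value of a fermionic function $F^{-}$ on $\Rm\setminus\overline{\Omega}$ (decaying at infinity, so that the Cauchy representation has no contribution from the sphere at infinity), then, since the outward normal to the exterior domain is $-n(\bx)$, the Cauchy formula gives $F^{-}(\boy,\bu)=-F_{\partial\Omega}f(\boy,\bu)$ for $\boy\in\Rm\setminus\overline{\Omega}$. Taking the non-tangential limit from the exterior and invoking the second case of Plemelj yields $f(\bo{\omega},\bu)=-\bigl(-\tfrac{1}{2}f(\bo{\omega},\bu)+S_{\partial\Omega}f(\bo{\omega},\bu)\bigr)$, i.e.\ $S_{\partial\Omega}f(\bo{\omega},\bu)=-\tfrac{1}{2}f(\bo{\omega},\bu)$. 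Conversely, given this identity, the function $-F_{\partial\Omega}f$ is fermionic on $\Rm\setminus\overline{\Omega}$ and the exterior Plemelj limit is $-\bigl(-\tfrac{1}{2}f+S_{\partial\Omega}f\bigr)=f$.

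The only nontrivial point I anticipate is the exterior direction, where the sign of the outward normal for the unbounded component has to be tracked carefully and some decay assumption at infinity is implicit so that the Cauchy representation of $F^{-}$ by a boundary integral over $\partial\Omega$ alone is valid; everything else is a direct unwinding of the Plemelj jump relations.
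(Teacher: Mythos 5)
Your proof is correct and follows essentially the same route as the paper: combine the Cauchy integral representation $g = F_{\partial\Omega}f$ of a fermionic extension with the Plemelj jump relations of Theorem~\ref{Plemelj}, reading off $S_{\partial\Omega}f = \pm\frac{1}{2}f$ and conversely. You are in fact more explicit than the paper about the exterior case, where you correctly track the sign reversal of the normal (giving $F^- = -F_{\partial\Omega}f$) and flag the implicit decay-at-infinity hypothesis, whereas the paper only remarks that a similar argument applies.
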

	\begin{proof}
	Let $g$ be the fermionic continuation into the domain $\Omega\times\Bm$ of the function $f$ given on $\partial\Omega\times\Bm$. Then, the Cauchy integral formula shows us that $g(\boy,\bu)=F_{\partial\Omega}f$. Hence, the non-tangential boundary values of $g$ are $f$. Further, with the Plemelj formula in Theorem \ref{Plemelj}, we have
	\begin{align*}
	f(\boy,\bu)=\frac{f(\boy,\bu)}{2}+S_{\partial\Omega}f(\boy,\bu),\ \text{for\ all}\ \boy\in\partial\Omega,\bu\in\Bm,
	\end{align*} 
	which gives us that
	\begin{align*}
	S_{\partial\Omega}f(\boy,\bu)=\frac{f(\boy,\bu)}{2},\ \text{for\ all}\ \boy\in\partial\Omega,\bu\in\Bm.
	\end{align*} 
	If vice, versa, we have
	\begin{align*}
	S_{\partial\Omega}f(\boy,\bu)=\frac{f(\boy,\bu)}{2},\ \text{for\ all}\ \boy\in\partial\Omega,\bu\in\Bm.
	\end{align*}
	Then, Theorem \ref{Plemelj} gives us that $F_{\partial\Omega}f$ has the boundary value $f$. Hence, it is the fermionic continuation of $f$ into $\Omega\times\Bm$. The case of exterior domain can be obtained with a similar argument as above.
	\end{proof}
	We denote $\mathcal{A}^p(\Omega\times\Bm,\Mk):=\ker R_k\cap L^p(\Omega\times\Bm,\Mk)$, and we introduce a Hodge decomposition of $L^p(\Omega\times\Bm,\Mk)$ with $1<p<\infty$ as follows.
\begin{theorem}[Hodge decomposition]
Let $\Omega\subset\Rm$ be a bounded domain with smooth boundary $\partial\Omega$ and $1<p<\infty$. Then, the space $L^p(\Omega\times\Bm,\Mk)$ allows the following orthogonal decomposition 
\begin{align*}
L^p(\Omega\times\Bm,\Mk)=\mathcal{A}^p(\Omega\times\Bm,\Mk)\bigoplus \big(R_k\overset{0}{W_{p}^{1}}(\Omega\times\Bm,\Mk)\big)
\end{align*}
with respect to the inner product
\begin{align*}
\langle f,g\rangle:=\int_{\Omega}\int_{\Sm}\overline{f(\bx,\bov)}g(\bx,\bov)dS(\bov)d\bx,
\end{align*}
where $\overset{0}{W_{p}^{1}}(\Omega\times\Bm,\Mk)$ is the subspace of functions in $W_{p}^{1}(\Omega\times\Bm,\Mk)$ with zero values on $\partial\Omega\times\Bm$.
\end{theorem}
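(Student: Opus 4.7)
The plan is to verify orthogonality of the two summands under the given pairing, completeness of their sum, and then extend from $p=2$ to general $p$. For orthogonality, take $\phi \in \mathcal{A}^p(\Omega\times\Bm,\Mk)$ and $h \in \overset{0}{W_p^1}(\Omega\times\Bm,\Mk)$ and evaluate $\langle \phi, R_k h\rangle$ via Stokes' theorem in $\bx$. Because $\phi$ is $\Mk$-valued and $P_k$ is self-adjoint under $\int_{\Sm}dS(\bu)$, the pairing collapses to $\int_{\Omega}\int_{\Sm}\overline{\phi}\,D_{\bx}h\,dS(\bu)d\bx$. Integration by parts produces a boundary term that vanishes (since $h|_{\partial\Omega}=0$) and a volume term $-\int_{\Omega}\int_{\Sm}\overline{D_{\bx}\phi}\,h\,dS(\bu)d\bx$ in which the Fischer decomposition $D_{\bx}\phi = R_k\phi + Q_k D_{\bx}\phi = Q_k D_{\bx}\phi$ puts $D_{\bx}\phi$ into $\bu\mathcal{M}_{k-1}$, orthogonal to $h \in \Mk$ under $\int_{\Sm}dS(\bu)$. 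Thus $\langle\phi, R_k h\rangle=0$.

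For completeness in the $p=2$ case, I first show $R_k\overset{0}{W_2^1}$ is closed in $L^2$: for any $v \in \overset{0}{W_2^1}$ the Borel--Pompeiu formula yields $v = T_k(R_k v)$ (the boundary Cauchy integral vanishes because $v|_{\partial\Omega}=0$), so convergence $R_k v_n \to g$ in $L^2$ lifts to convergence $v_n = T_k(R_k v_n) \to T_k g$ in $W_2^1$ by the continuity of $T_k$ (Section 5), and the vanishing trace survives in the limit. Combined with the closedness of $\mathcal{A}^2$ (the preceding theorem) and the orthogonality step, this exhibits $\mathcal{A}^2 \oplus R_k\overset{0}{W_2^1}$ as a closed orthogonal subspace of $L^2$. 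To show it exhausts $L^2$, I suppose $g$ is orthogonal to both summands; the condition $g \perp R_k\overset{0}{W_2^1}$ expands to $\langle R_k v, g\rangle = 0$ for every $v \in \overset{0}{W_2^1}$, and integrating by parts exactly as in the orthogonality step (the boundary integral vanishes because $v|_{\partial\Omega}=0$, and the Fischer decomposition handles the remaining spherical pairing) transfers this to $\langle v, R_k g\rangle = 0$. Density of $\overset{0}{W_2^1}$ in $L^2$ forces $R_k g = 0$ weakly, so $g \in \mathcal{A}^2$; combined with $g \perp \mathcal{A}^2$ this yields $g = 0$. Hence the sum is dense, and being already closed it equals $L^2$.

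For the extension to general $1<p<\infty$, orthogonality and closedness of $R_k\overset{0}{W_p^1}$ go through verbatim, relying on the $L^p$-boundedness of $T_k$ (Theorem \ref{Tkb}) and continuity of the trace map. The main obstacle is producing a bounded complementary projection in the $L^p$ category, since the Hilbert space argument above gives only an abstract orthogonal projection at $p=2$. I would handle this either by a duality argument pairing the $L^p$ case against the $L^{p'}$ result, or by constructing the Bergman-type projection explicitly via inversion of $\tfrac{1}{2}I + S_{\partial\Omega}$ on the appropriate boundary Hardy space, using the Plemelj formula (Theorem \ref{Plemelj}) and Corollary \ref{extension} together with the $L^p$-boundedness of the Calder\'on--Zygmund operators $F_{\partial\Omega}$ and $S_{\partial\Omega}$ from \cite{Li} and the classical reference \cite{23}.
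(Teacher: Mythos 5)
Your proof is essentially correct but follows a genuinely different route from the paper's. You give a classical Hilbert-space argument: you verify orthogonality of the two summands directly (Stokes in $\bx$, vanishing boundary term from $h|_{\partial\Omega}=0$, and the Fischer decomposition forcing $D_{\bx}\phi\in\bu\mathcal{M}_{k-1}$ when $R_k\phi=0$, orthogonal to $h\in\Mk$ on $\Sm$), then show $R_k\overset{0}{W_2^1}$ is closed via $v=T_k(R_kv)$ and the $W^1_p$-boundedness of $T_k$, and finally show the closed orthogonal sum exhausts $L^2$ by a density argument. The paper instead proceeds constructively: starting from $f$ in the orthocomplement $Y$ of $\mathcal{A}^p$, it sets $g=T_kf$, tests orthogonality against translates of $\overline{E_k}$ sitting outside $\overline\Omega$ to deduce $F_{\partial\Omega}(\mathrm{tr}\,g)=0$ in the exterior, uses the Plemelj formula and the fermionic-continuation corollary to extend $\mathrm{tr}\,g$ to a function $h\in\ker R_k$ inside, and shows that $\gamma=g-h\in\overset{0}{W_p^1}$ satisfies $R_k\gamma=f$. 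Your route is cleaner and more self-contained at $p=2$, and it actually supplies something the paper's argument leaves implicit, namely the verification that $\mathcal{A}^p\perp R_k\overset{0}{W_p^1}$; the paper only shows $Y\subseteq R_k\overset{0}{W_p^1}$ and never checks the reverse containment. The paper's route, in turn, buys a concrete description of the preimage $\gamma$ in terms of boundary data, which is what makes the Bergman projection machinery in the sequel tractable.

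Two small points to tighten. First, after obtaining $\langle v,R_kg\rangle=0$ for all $v\in\overset{0}{W_2^1}$ you conclude $g\in\mathcal{A}^2$; this needs a line of elliptic (or at least hypoelliptic) regularity for $R_k$ to pass from a distributional kernel element to a $C^1$ solution, since $\mathcal{A}^2=\ker R_k\cap L^2$ is defined with classical solutions. Second, you correctly flag that for $p\neq 2$ the orthogonal-complement formalism breaks down and a duality or boundary-integral argument is needed; note that the paper's own proof silently uses $Y=L^p\ominus\mathcal{A}^p$ for all $1<p<\infty$, so your honesty here is an improvement rather than a deficiency, but a full proof of the stated theorem would indeed need the $L^{p'}$ duality step you outline.
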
  
\begin{proof}
Let $Y:=L^p(\Omega\times\Bm,\Mk)\ominus\mathcal{A}^p(\Omega\times\Bm,\Mk)$ be the orthogonal complement to $\mathcal{A}^p(\Omega\times\Bm,\Mk)$ with respect to the inner product given above. For any function $f\in Y$, then, Theorem \ref{Tkb} tells us that $g=T_kf\in L^p(\Omega\times\Bm,\Mk)$. Since $R_k$ is the left inverse of $T_k$, we have $f=R_k g$. For any $\varphi\in \mathcal{A}^p(\Omega\times\Bm,\Mk)$, which is orthogonal to elements in $Y$ as
\begin{align*}
0=&\int_{\Omega}\int_{\Sm}\overline{\varphi(\bx,\bov)}f(\bx,\bov)dS(\bov)d\bx\\
=&\int_{\Omega}\int_{\Sm}\overline{\varphi(\bx,\bov)}(R_kg)(\bx,\bov)dS(\bov)d\bx.
\end{align*}
In particular, let $\varphi(\bx,\bov)=\overline{E_k(\bx_l-\boy,\bu,\bov)}$, where $\{\bx_l\}$ is dense in $\Rm\backslash\overline{\Omega}$. Then, we have
\begin{align*}
0=&\int_{\Omega}\int_{\Sm}E_k(\bx_l-\boy,\bu,\bov)(R_kg)(\bx,\bov)dS(\bov)d\bx\\
=&\int_{\partial\Omega}\int_{\Sm}E_k(\bx_l-\boy,\bu,\bov)n(\bx)g(\bx,\bov)dS(\bov)d\sigma(\bx)\\
&-\int_{\Omega}\int_{\Sm}\bigg(E_k(\bx_l-\boy,\bu,\bov)R_k\bigg)g(\bx,\bov)dS(\bov)d\bx\\
=&\int_{\partial\Omega}\int_{\Sm}E_k(\bx_l-\boy,\bu,\bov)n(\bx)g(\bx,\bov)dS(\bov)d\sigma(\bx)=F_{\partial\Omega}(\text{tr}g)(\bx,\bu),
\end{align*}
where $\text{tr}g$ stands for the trace of $g$. Hence, we have $F_{\partial\Omega}(\text{tr}g)=0$ on $\Rm\times\Bm\backslash\overline{\Omega}\times\Bm$ for continuation. Then, Theorem \ref{Plemelj} gives us that 
\begin{align*}
S_{\partial\Omega}f(\bo{\omega},\bu)=\frac{f(\bo{\omega},\bu)}{2}.
\end{align*}
Therefore, we know that $\text{tr}g$ can be extended fermionicly into $\Omega\times\Bm$ by Corollary \ref{extension}, and we denote its extension by $h$. Then, we have $\text{tr}_{\partial\Omega}g=\text{tr}_{\partial\Omega}h$. Now, we denote ${\gamma}=g-h$, and we obviously have $\text{tr}_{\partial\Omega}{\gamma}=0$ and ${\gamma}\in W_{p}^{1}(\Omega\times\Bm,\Mk)$. Further, we can see that $R_k\gamma=R_kg=f\in Y$, which completes the proof.
\end{proof}
The Hodge decomposition above gives rise to two orthogonal projections as follows.
\begin{align*}
&P:\ L^p(\Omega\times\Bm,\Mk)\longrightarrow \mathcal{A}^p(\Omega\times\Bm,\Mk),\\
&Q:\ L^p(\Omega\times\Bm,\Mk)\longrightarrow\big(R_kW_{p}^{1}(\Omega\times\Bm,\Mk)\big).
\end{align*}
In particular, the projection $P$ can be considered as a generalization of the classical Bergman projection. Hence, we call the space $\mathcal{A}^p(\Omega\times\Bm,\Mk)$ by \emph{fermionic Bergman spaces}. In particular, we can see that $\mathcal{A}^2(\Omega\times\Bm,\Mk)$ is a right Hilbert Clifford module with respect to the inner product
\begin{align*}
\langle f,g\rangle:=\int_{\Omega}\int_{\Sm}\overline{f(\bx,\bov)}g(\bx,\bov)dS(\bov)d\bx.
\end{align*}
Further, with Proposition \ref{pestimate} and Aronszajn-Bergman Theorem in \cite[Theorem 1.3]{Bra}, there is a reproducing kernel for $\mathcal{A}^2(\Omega\times\Bm,\Mk)$. Hence, this gives rise to a theory of generalized Bergman spaces in higher spin cases, which will be investigated in an upcoming article.
% This environment is for acknowledgements
%%%%%%%%%%%%%%%%%%%%%%%%
% ------------------------------------------------------------------------
\subsection*{Acknowledgements}
Chao Ding is supported by the National Natural Science Foundation
of China (No. 12271001) and the Natural Science Foundation of Anhui Province (No. 2308085MA03). 
\subsection*{Disclosure statement}
No potential conflict of interest was reported by the author.

% ------------------------------------------------------------------------
\end{document}